\def\W{\mathcal{SD}}
\def\M{M}
\def\Z{\mathbb{Z}}
\def\N{\mathbb{N}}
\def\C{\mathbb{C}}
\numberwithin{equation}{section}
\newtheorem{theo}{Theorem}[section]
\newtheorem{defi}[theo]{Definition}
\newtheorem{coro}[theo]{Corollary}
\newtheorem{lemm}[theo]{Lemma}
\newtheorem{prop}[theo]{Proposition}
\newtheorem{case}{Case}
\newtheorem{rema}[theo]{Remark}
\newcommand{\nc}{\newcommand}
\nc{\tred}[1]{\textcolor{red}{#1}}
\nc{\tblue}[1]{\textcolor{blue}{#1}}
\nc{\tgreen}[1]{\textcolor{green}{#1}}
\nc{\tpurple}[1]{\textcolor{purple}{#1}}
\nc{\btred}[1]{\textcolor{red}{\bf #1}}
\nc{\btblue}[1]{\textcolor{blue}{\bf #1}}
\nc{\btgreen}[1]{\textcolor{green}{\bf #1}}
\nc{\btpurple}[1]{\textcolor{purple}{\bf #1}}
\nc{\yf}[1]{\textcolor{red}{Yufeng:#1}}
\nc{\ld}[1]{\textcolor{blue}{Liu Dong: #1}}
\nc{\hb}[1]{\textcolor{purple}{Haibo: #1}}
\begin{document}

\title[Irreducible    modules for  N=2 superconformal algebras]{Irreducible    modules over  N=2  superconformal algebras from algebraic D-modules}

\author{Haibo Chen}
\address{Department of Mathematics, Jimei University, Xiamen, Fujian 361021, China}
\email{hypo1025@jmu.edu.cn}

\author{Xiansheng Dai}
\address{School of Mathematical Sciences, Guizhou Normal University,
Guiyang 550001,  China}
\email{daisheng158@126.com}

\author{Dong Liu}
\address{Department of Mathematics, Huzhou University, Zhejiang Huzhou, 313000, China}
\email{liudong@zjhu.edu.cn}

\author[Yufeng Pei]{Yufeng Pei}
\address{Department of Mathematics, Huzhou University, Zhejiang Huzhou, 313000, China}\email{pei@zjhu.edu.cn}

\date{\today}

\vspace{-5mm}

\begin{abstract}
 In this paper, we introduce a family of functors denoted $\mathscr{F}_b$ that act on algebraic D-modules and generate modules over N=2 superconformal algebras. We prove these functors preserve irreducibility for all values of $b$, with a few clear exceptions described. We also establish necessary and sufficient conditions to determine when two such functors are naturally isomorphic. Applying $\mathscr{F}_b$ to  N=1 super-Virasoro algebras recovers the functors previously introduced in \cite{CDLP}. Our new functors also facilitate the recovery of specific irreducible modules over N=2 superconformal algebras, including intermediate series and $U(\mathfrak{h})$-free modules. Additionally, our constructed functors produce several new irreducible modules for N=2 superconformal algebras.

\end{abstract}

\keywords{N=2  superconformal algebras,  Weyl algebra, Irreducible module}

.

\subjclass[2020]{17B10, 17B65, 17B68}

\maketitle
\vspace{-10mm}

\section{Introduction}

 N=2 superconformal algebras play an important role in  two-dimensional quantum field theory and conformal field theory models. This area of study has received extensive attention from both physicists and mathematicians. The four sectors that comprise  N=2 superconformal algebras are the Neveu-Schwarz sector, Ramond sector, topological sector, and twisted sector. Interestingly, Schwimmer and Seiberg discovered that the Neveu-Schwarz sector and Ramond sector are isomorphic \cite{SS}. Additionally, Dijkgraaf et al. introduced the topological sector through a topological twist in their work \cite{DVV}. Remarkably, the topological sector is isomorphic to the Neveu-Schwarz sector, resulting in only two distinguishable sectors in  N=2 superconformal algebras.

Significant progress has been made in  the weight representation theory  of  N=2 superconformal algebras \cite{FST,FSST,IK,IO1,IO2,KS,ST}. Furthermore, researchers explored the representation theory of  N=2 superconformal algebras from a vertex algebra perspective, as demonstrated in works such as \cite {Ad1,KW,Sa}. The classification of irreducible modules with finite-dimensional weight spaces for the  sector has been successfully accomplished  recently in \cite{LPX0,XL}. There are three types of modules: irreducible highest weight or lowest weight modules, and irreducible modules of the intermediate series (see \cite{FJS,LSZ}). Additionally, a family of non-weight modules for N=2  superconformal algebras, known as $U(\mathfrak{h})$-free modules,  have been constructed and classified in \cite{CDL,YYX2}.

The theory of modules over Weyl algebras, also known as algebraic D-modules, is a vital tool for studying the representation theory of Lie (super)algebras. Utilizing this theory, researchers have constructed and analyzed many fascinating irreducible modules, such as intermediate series modules and $U(\mathfrak{h})$-free modules \cite{CCGMZ,Nil}, for the Virasoro algebra \cite{LGZ,LZ,LZ0}  and N=1 super-Virasoro algebras \cite{CDLP}.

This paper aims to extend D-module theory to representations of  N=2 superconformal algebras. We introduce a family of functors, denoted $\mathscr{X}_b$, which act on Weyl superalgebra modules and produce modules for  N=2 superconformal algebras. These functors depend on a complex parameter $b\in \mathbb{C}$. By applying the functor $\mathscr{S}$ from \cite{CDLP}, mapping Weyl algebra modules to Weyl superalgebra modules, we obtain another family functor $\mathscr{F}_b$ that produces  modules of  N=2 superconformal algebras from algebraic D-modules (Theorem \ref{M}). We show these functors preserve irreducibility unless $b=0$ or $\frac{1}{2}$ (Theorem \ref{M1}). We also determine the necessary and sufficient conditions for two such functors to be naturally isomorphic (Theorem \ref{th4333}).  Restricting $\mathscr{F}_b$ to   N=1 super-Virasoro algebras recovers the functor introduced in \cite{CDLP}. Our functors also facilitate retrieving certain known  irreducible modules over N=2 superconformal algebras, like intermediate series and  $U(\mathfrak{h})$-free ones \cite{YYX1}. Furthermore, we present new examples of irreducible  modules over  N=2 superconformal algebras through our constructed functors.

Expanding the irreducible module construction techniques introduced in this work to encompass a wider range of superconformal algebra classes remains an open challenge. We also aim to further explore the generalization of these methods in future work.

The paper is structured as follows:
Section 2 reviews N=2 superconformal algebras, the Weyl algebra, and the Weyl superalgebra.
In Section 3, we introduce a family of functors that map Weyl algebra modules to  modules over N=2  superconformal algebras.
In Section 4, we examine the properties of irreducibility preservation and investigate natural isomorphisms for these functors.
Finally, in Section 5, we utilize these functors to construct  several  new irreducible modules over N=2  superconformal algebras.

Throughout this paper, $\C,\C^*,\Z, \Z^*, \N$ and $\Z_+$ will denote the sets of complex numbers, nonzero complex numbers, integers, nonzero integers, nonnegative integers, and positive integers, respectively. All vector spaces, vector algebras, and modules will be considered to be over the field of complex numbers $\C$, and all modules for Lie superalgebras will be $\Z_2$-graded.

\section{Preliminary}

This section reviews fundamental definitions, notations, and results related to  N=2 superconformal algebras and Weyl superalgebras.

\subsection{Some basic definitions  of superalgebras}

Let ${L}$ be a Lie algebra or a Lie superalgebra. We will use ${U}(L)$ to refer to its universal enveloping algebra. The Kronecker delta will be denoted by $\delta_{i,j}$.
For any module $V=V_{\bar 0}\oplus V_{\bar1}$ over a Lie superalgebra, the parity change functor will be denoted by $\Pi(V)$. It is defined as  $(\Pi(V))_{\bar0}= V_{\bar 1}$ and $(\Pi(V))_{\bar1}=V_{\bar0}$.
Suppose $L$ is an associative (super)algebra or a Lie (super)algebra. We will denote the category of $L$-modules as ${\rm Mod}\ L$.

Let $M$ be a module over an associative (super)algebra $L$. We say that the action of $x\in L$ on $v\in M$ is nilpotent if there exists $n\in\Z_+$ such that $x^n v=0$ for $v\neq 0$.
Let $E$ be a subspace of $L$.  If there exists $0\neq v\in M$ and $x\in E$ such that $xv=0$, then we say that $M$ is $E$-torsion. Otherwise, we say that $M$ is $E$-torsion-free.


\subsection{$N=2$ superconformal   algebras}

\begin{defi}
  {The $N=2$ superconformal algebra} $\hat{\mathcal{G}}[\epsilon](\epsilon=0,\frac{1}{2})$ is an infinite-dimensional Lie superalgebra. It has a basis of even elements given by $L_n$, $H_n$, $C$, where $n\in \mathbb{Z}$, and a basis of odd elements given by $G^+_p$ and $G^-_p$, where $p\in \mathbb{Z}+\epsilon$. The algebra is defined by the following relations:
 \begin{equation*}\label{def2131.1}
\aligned
&[L_m,L_n]= (m-n)L_{m+n}+\frac{m^{3}-m}{12}\delta_{m+n,0}C,\\
&[H_m,H_n]=\frac{m}{3}\delta_{m+n,0}C,\\  &[L_m,H_n]=-nH_{m+n},\\
& [H_m,G_{p}^{\pm}]=\pm G_{m+p}^{\pm}, \\
&[L_m,G_{p}^{\pm}]=\left(\frac{m}{2}-p\right)G_{m+p}^{\pm},\\
&[G_{p}^-,G_{q}^+]= 2L_{p+q}-(p-q)H_{p+q}+\frac{4p^2-1}{12}\delta_{p+q,0}C,\\&
[G_{p}^{+},G_{q}^{+}]=[G_{p}^{-},G_{q}^{-}]=[\hat{\mathcal{G}}[\epsilon],C]=0,
\endaligned
\end{equation*}
  where $m,n\in\Z$, $p,q\in\Z+\epsilon$. ${\hat{\mathcal{G}}}[0]$  is called the $N=2$ Ramond algebra and
$\hat{\mathcal{G}}[\frac{1}{2}]$ is called
the  $N=2$ Neveu-Schwarz algebra.
  \end{defi}

\begin{lemm}[ \cite{SS}]\label{lemm2.5}
The $N=2$ Neveu-Schwarz algebra $\hat{\mathcal{G}}[{\frac{1}{2}}]$ and the $N=2$ Ramond algebra $\hat{\mathcal{G}}[0]$ are isomorphic to each other by the spectral shift isomorphism:
$\delta: \hat{\mathcal{G}}[{\frac{1}{2}}] \rightarrow \hat{\mathcal{G}}[{0}]$, defined by
\begin{eqnarray*}
&&\delta(L_m)=L_m+\frac{1}{2} H_m+\delta_{m,0}\frac{C}{24},
\\&& \delta(H_m)=H_m+\delta_{m,0}\frac{C}{6},
\\&&
\delta(G_p^{\pm})=G_{p\pm\frac{1}{2}}^{\pm},\\
&&\delta(C)=C,
\end{eqnarray*}
where $m\in\Z,p\in\frac{1}{2}+\Z$.

\end{lemm}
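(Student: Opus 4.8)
The plan is to verify directly that $\delta$ is a parity-preserving linear bijection and that it respects every defining bracket of $\hat{\mathcal{G}}[\frac{1}{2}]$; since a linear map between Lie superalgebras that is specified on a basis is a homomorphism as soon as it respects the brackets on all pairs of basis elements, this suffices to conclude that $\delta$ is an isomorphism onto $\hat{\mathcal{G}}[0]$.

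First I would settle bijectivity. The map $\delta$ sends the even generators $L_m,H_m,C$ to even elements and the odd generators $G_p^{\pm}$ to odd elements, so it preserves the $\Z_2$-grading. On the span of each ``weight triple'' $\{L_m,H_m\}$ (together with $C$ when $m=0$) it acts by a unitriangular matrix, while on the odd part the shifts $p\mapsto p+\frac{1}{2}$ for $G^{+}$ and $p\mapsto p-\frac{1}{2}$ for $G^{-}$ carry $\frac{1}{2}+\Z$ bijectively onto $\Z$, which is exactly the index set of the odd part of $\hat{\mathcal{G}}[0]$ (where $\epsilon=0$). Hence $\delta$ is a linear bijection; one can in fact exhibit its inverse, $L_m\mapsto L_m-\frac{1}{2}H_m+\delta_{m,0}\frac{C}{24}$, $H_m\mapsto H_m-\delta_{m,0}\frac{C}{6}$, $G_p^{\pm}\mapsto G_{p\mp\frac{1}{2}}^{\pm}$, $C\mapsto C$.

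Next I would check $\delta([x,y])=[\delta(x),\delta(y)]$ on all pairs of basis elements, one defining relation at a time. For $[H_m,G_p^{\pm}]$, $[L_m,G_p^{\pm}]$, $[G_p^{\pm},G_q^{\pm}]$ and the brackets involving $C$, the verification is immediate; the one point worth flagging is that in $[\delta(L_m),\delta(G_p^{\pm})]$ the extra term $\frac{1}{2}[H_m,G_{p\pm\frac{1}{2}}^{\pm}]=\pm\frac{1}{2}G_{m+p\pm\frac{1}{2}}^{\pm}$ cancels precisely the $\mp\frac{1}{2}$ produced by shifting the index of $G_p^{\pm}$, so the coefficient $\frac{m}{2}-p$ is reproduced on the nose. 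The remaining three relations, $[L_m,L_n]$, $[L_m,H_n]$ and $[G_p^{-},G_q^{+}]$, carry central terms and require a short computation. For $[G_p^{-},G_q^{+}]$, writing $p'=p-\frac{1}{2}$ and $q'=q+\frac{1}{2}$, one uses $p'+q'=p+q$, $p'-q'=p-q-1$ and $4p'^2-1=4p(p-1)$ to see that both $[\delta(G_p^{-}),\delta(G_q^{+})]$ and $\delta([G_p^{-},G_q^{+}])$ equal $2L_{p+q}-(p-q-1)H_{p+q}+\frac{p(p-1)}{3}\delta_{p+q,0}C$. Similarly, in $[L_m,L_n]$ the correction $\frac{1}{4}[H_m,H_n]=\frac{m}{12}\delta_{m+n,0}C$ is exactly what turns the original anomaly $\frac{m^3-m}{12}$ into $\frac{m^3}{12}$, so both $[\delta(L_m),\delta(L_n)]$ and $\delta([L_m,L_n])$ equal $(m-n)L_{m+n}+\frac{m-n}{2}H_{m+n}+\frac{m^3}{12}\delta_{m+n,0}C$; the check for $[L_m,H_n]$ is analogous.

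I expect the only real obstacle to be bookkeeping: keeping the two index conventions ($\frac{1}{2}+\Z$ versus $\Z$) straight and confirming that the $\delta_{\cdot,0}C$-coefficients reorganize correctly in the three anomalous relations. There is no conceptual difficulty. Once these identities are verified, $\delta$ is a bijective homomorphism of $\Z_2$-graded Lie algebras, hence the claimed isomorphism $\hat{\mathcal{G}}[\frac{1}{2}]\to\hat{\mathcal{G}}[0]$.
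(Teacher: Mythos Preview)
Your proposal is correct. The paper itself provides no proof of this lemma at all: it is stated with a citation to \cite{SS} and left unproved, since the spectral shift isomorphism is a standard fact. Your direct verification---checking bijectivity on the basis and then confirming $\delta([x,y])=[\delta(x),\delta(y)]$ relation by relation---is exactly the natural way to establish it, and all of your bookkeeping (in particular the cancellation in $[L_m,G_p^{\pm}]$, the identity $4(p-\tfrac12)^2-1=4p(p-1)$ in $[G_p^-,G_q^+]$, and the anomaly shift $\tfrac{m^3-m}{12}+\tfrac{m}{12}=\tfrac{m^3}{12}$ in $[L_m,L_n]$) checks out. One tiny omission: you do not explicitly list $[H_m,H_n]$ among the relations to be verified, but since $\delta(H_m)-H_m$ is central this bracket is preserved trivially, so there is no actual gap.
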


To simplify notation, we denote
$$
\hat{\mathcal{G}}=\hat{\mathcal{G}}[0] \cong \hat{\mathcal{G}}[\frac{1}{2}],\quad \mathcal{G}=\hat{\mathcal{G}}/\C C.
$$

 \subsection{Weyl (super)algebra}
Denote by
$
\mathcal{A}=\mathbb{C}\left[t, t^{-1}, \theta\right]
$
the associative superalgebra of Laurent polynomials associated to an even, $t$, and an odd, $\theta$, formal variable. Assume that $\theta^2=0$ and $\theta t=t \theta$.   Let  $ \partial_t=\frac{d}{dt}$, $D=t\frac{d}{dt}$ and $\partial_\theta =\frac{d}{d\theta}$.

\begin{defi}
The Weyl algebra $\mathcal{D}$ is the associative algebra of regular differential operators on the circle $S^1$generated by  $t^{\pm1}$ and  $\partial_t$,  subject to the relations  $\partial_t t - t \partial_t = 1.$
\end{defi}

 Furthermore, all irreducible $\mathcal{D}$-modules  have been classified.

\begin{lemm}[\cite{B,LZ}]\label{211}
	Let $M$ be an  irreducible  $\mathcal{D}$-module.
	\begin{itemize}
		\item[\rm(i)]  If $M$ is $\C[t^{\pm1}]$-torsion-free, then $M\cong \mathcal{D}/(\mathcal{D}\cap\big(\C(t)[ D]\tau)\big)$ for some irreducible element   $\tau$    in the associative algebra $\C(t)[ D]$;
		\item[\rm(ii)]  If $M$ is $\C[t^{\pm1}]$-torsion, then $M\cong \Omega(\lambda)$ for some $\lambda\in\C^*$, where $\Omega(\lambda)=\C [ D]$ is an irreducible $\mathcal{D}$-module with
  $$
  t^m D^n=\lambda^m( D-m)^n,\quad  D D^n= D^{n+1},\quad\forall n\in\N,m\in\Z.
  $$
	\end{itemize}
\end{lemm}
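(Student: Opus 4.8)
My plan is to exploit two structural facts about $\mathcal{D}=\C[t^{\pm1},\partial_t]$. First, writing $D=t\partial_t$ and letting $\delta=t\tfrac{d}{dt}$ act on $\C[t^{\pm1}]$, one has the Ore-extension presentation $\mathcal{D}=\C[t^{\pm1}][D;\delta]$; in particular $\mathcal{D}$ carries the PBW-type basis $\{t^mD^n:m\in\Z,\ n\in\N\}$ and obeys the commutation rule $t^mD^n=(D-m)^nt^m$ (which follows by induction from $[D,t^m]=mt^m$). Second, an irreducible $\mathcal{D}$-module $M$ is either $\C[t^{\pm1}]$-torsion or $\C[t^{\pm1}]$-torsion-free, and these two alternatives will yield statements (ii) and (i) respectively. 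I would dispose of (ii) first, as it is elementary, and then reduce (i) to the representation theory of the localized algebra.

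For (ii): assume $fv=0$ for some $0\neq v\in M$ and $0\neq f\in\C[t^{\pm1}]$. Factoring $f$ over $\C$ and absorbing the (invertible) power of $t$, I may take $f=\prod_{i=1}^{r}(t-a_i)$ with all $a_i\in\C^*$ and $r\geq1$. Telescoping along $v,\ (t-a_1)v,\ (t-a_1)(t-a_2)v,\dots$ and stopping at the first vanishing product yields $0\neq w\in M$ with $(t-\lambda)w=0$, where $\lambda\neq0$ since $t$ is invertible in $\mathcal{D}$. Irreducibility forces $M=\mathcal{D}w$, and the commutation rule gives $t^mD^nw=\lambda^m(D-m)^nw$, whence $M=\C[D]w=\mathrm{span}\{D^nw:n\in\N\}$. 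A nontrivial polynomial relation among the $D^nw$ would make $\dim M<\infty$; but then the identity operator $[\partial_t,t]=1$ on $M$, being a commutator, has trace $0=\dim M$, a contradiction. So $\{D^nw\}$ is a basis and the action is exactly that of $\Omega(\lambda)$.

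For (i): $M$ being $\C[t^{\pm1}]$-torsion-free, the multiplicative set $S=\C[t^{\pm1}]\setminus\{0\}$ acts injectively, so $M$ embeds in $\widetilde M:=S^{-1}M\cong\C(t)\otimes_{\C[t^{\pm1}]}M$, a module over $\widetilde{\mathcal{D}}:=S^{-1}\mathcal{D}$. I would verify that $\delta$ extends to $\C(t)$ by the quotient rule and that $\widetilde{\mathcal{D}}\cong\C(t)[D]$, a left and right principal ideal domain (division by $\deg_D$). For any $0\neq v\in M$ one has $\mathcal{D}v=M$, hence $\widetilde{\mathcal{D}}v=S^{-1}M=\widetilde M$, so $\widetilde M\cong\widetilde{\mathcal{D}}/\widetilde{\mathcal{D}}\tau$ with principal left ideal $\widetilde{\mathcal{D}}\tau=\mathrm{Ann}_{\widetilde{\mathcal{D}}}(v)$, and $\tau$ is not a unit since $M\neq0$. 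Because $M\hookrightarrow\widetilde M$ we get $\mathrm{Ann}_{\mathcal{D}}(v)=\mathcal{D}\cap\widetilde{\mathcal{D}}\tau=\mathcal{D}\cap\C(t)[D]\tau$, i.e. $M\cong\mathcal{D}/(\mathcal{D}\cap\C(t)[D]\tau)$. It remains to show $\tau$ is irreducible: a factorization $\tau=xb$ into non-units gives a nonzero proper $\widetilde{\mathcal{D}}$-submodule $N=\widetilde{\mathcal{D}}b/\widetilde{\mathcal{D}}\tau$ of $\widetilde M$; as $N$ is $\C(t)$-stable, $N\cap M$ is a nonzero $\mathcal{D}$-submodule of $M$ (for $0\neq n\in N$, clearing denominators gives $sn\in(N\cap M)\setminus\{0\}$ for suitable $s\in S$), so $N\cap M=M$ and then $\widetilde M=S^{-1}M\subseteq N$, a contradiction.

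The elementary ingredients — the Ore presentation, the telescoping, the trace argument excluding finite-dimensional modules — are routine. I expect the real obstacle to lie in (i): establishing carefully that $S$ is an Ore set with $S^{-1}\mathcal{D}\cong\C(t)[D]$ a principal ideal domain, and, above all, transferring the irreducibility of $M$ over $\mathcal{D}$ to the irreducibility of $\tau$ over $\C(t)[D]$. That transfer is the crux: it rests on the flatness of localization, the embedding $M\hookrightarrow\widetilde M$, and the fact that every $\widetilde{\mathcal{D}}$-submodule of $\widetilde M$ is automatically $\C(t)$-saturated, so that proper nonzero $\widetilde{\mathcal{D}}$-submodules contract to proper nonzero $\mathcal{D}$-submodules of $M$.
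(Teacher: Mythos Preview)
The paper does not prove this lemma; it is quoted without proof from \cite{B} and \cite{LZ}. Your argument is correct and is essentially the classical one found in those references: for (ii), a torsion element yields (after factoring and telescoping) an eigenvector $w$ for $t$ with eigenvalue $\lambda\in\C^*$, the operator identity $t^mD^n=(D-m)^nt^m$ then forces $M=\C[D]w$ with the $\Omega(\lambda)$-action, and the trace obstruction $\mathrm{tr}\,[\partial_t,t]=\dim M$ rules out finite dimension; for (i), localizing at $S=\C[t^{\pm1}]\setminus\{0\}$ passes to the left and right principal ideal domain $\C(t)[D]$, where the annihilator of a cyclic generator is principal, and the contraction argument (proper nonzero $\widetilde{\mathcal D}$-submodules of $\widetilde M$ meet $M$ in proper nonzero $\mathcal D$-submodules) forces that generator to be irreducible.

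One small point you leave implicit: you assert $\tau$ is not a unit but do not explicitly rule out $\mathrm{Ann}_{\widetilde{\mathcal D}}(v)=0$. This is harmless, since the very same contraction argument handles it: were the annihilator zero, $\widetilde M\cong\C(t)[D]$ would have proper nonzero left ideals (e.g.\ $\C(t)[D]\cdot D$), and contracting one to $M$ would contradict irreducibility. With that remark your proof is complete and self-contained.
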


\begin{defi}[\cite{CW}]
The Weyl superalgebra $\mathcal{SD}$ is  the associative superalgebra of regular differential operators
on the supercircle $S^{1 \mid 1}$  generated by  $t^{\pm1}$ and $\theta$, $\partial_t$, $\partial_{\theta}$, subject to the relations
$$\partial_t t - t \partial_t = 1, \quad \partial_{\theta} \theta + \theta \partial_{\theta} = 1, \quad \partial_t\theta = \partial_{\theta} t = 0.$$
\end{defi}

Note that the following elements
$$
t^k D^l, t^k D^l \theta \partial_\theta,t^k D^l \theta, t^k D^l \partial_\theta,\quad  k \in \mathbb{Z}, l \in \mathbb{N}
$$
form a linear basis of $\mathcal{S D}$, where   the odd elements $\theta$ and $\partial_\theta$ generate the four-dimensional Clifford superalgebra with relation $\theta \partial_\theta+\partial_\theta \theta=1$. Clearly, $\mathcal{SD} $ is isomorphic to the tensor product of the Weyl algebra $\mathcal{D}$ and the Clifford superalgebra.

In \cite{CDLP}, we constructed a functor $\mathscr{S}:{\rm Mod}\,\mathcal{D}\to {\rm Mod}\,\mathcal{SD}$ that preserves irreducibility and establishes a one-to-one correspondence between irreducible modules over the Weyl algebra and the Weyl superalgebra.

\begin{prop}[\cite{CDLP}]\label{S}Let $M$ be a $\mathcal{D}$-module, and let us take a copy of $M$, which we will denote as $\overline M$. We can then turn this copy into a $\mathcal{D}$-module using the operation
\begin{equation}
    x\cdot \overline{v}=\overline{xv},\quad \forall x\in \mathcal{D},\overline v\in \overline M.\label{S1}
\end{equation}
 Define
\begin{equation}
  \partial_\theta \cdot v=0,\quad \theta\cdot v=\overline v,\quad \partial_\theta\cdot \overline v=v,\quad  \theta\cdot \overline v=0,\quad \forall v\in M. \label{S2}
\end{equation}
 Then
 \begin{itemize}
     \item[\rm (i)]  $\mathscr{S}(M)=M\oplus \overline M$ is an $\mathcal{SD}$-module,
     \item[\rm (ii)]there exists a functor $\mathscr{S}:{\rm Mod}\,\mathcal{D}\to {\rm Mod}\,\mathcal{SD}$ with $M\mapsto \mathscr{S}(M)$,
     \item[\rm (iii)]the functor $\mathscr{S}$ preserves irreducible objects,
     \item[\rm(iv)]there is a one-to-one correspondence between the irreducible $\mathcal{SD}$-modules and irreducible $\mathcal{D}$-modules.
 \end{itemize}

\end{prop}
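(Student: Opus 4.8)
The plan is to establish the four parts in turn; (i) and (ii) are direct verifications, while (iii) and (iv) exploit the fact that $\theta$ and $\partial_\theta$ interchange $M$ with its copy $\overline M$ in a controlled way.

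\emph{Parts (i) and (ii).} First I would check that \eqref{S1} and \eqref{S2} are well defined, and then verify each defining relation of $\mathcal{SD}$ on $\mathscr{S}(M)=M\oplus\overline M$. The Weyl relation $\partial_t t-t\partial_t=1$ holds because $M$ and $\overline M$ are $\mathcal{D}$-modules and the $\mathcal{D}$-action preserves each summand. The Clifford-type relations $\theta^2=\partial_\theta^2=0$ and $\partial_\theta\theta+\theta\partial_\theta=1$ are read off from \eqref{S2} by evaluating separately on a vector of $M$ and on a vector of $\overline M$ (for instance $(\partial_\theta\theta+\theta\partial_\theta)v=\partial_\theta\overline v+0=v$ and $(\partial_\theta\theta+\theta\partial_\theta)\overline v=0+\theta v=\overline v$), while the vanishing of the super-brackets of $\theta,\partial_\theta$ with $t^{\pm1},\partial_t$ follows from $x(\theta v)=x\overline v=\overline{xv}=\theta(xv)$ for $x\in\mathcal{D}$ and the analogous identities on $\overline M$ and for $\partial_\theta$. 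Equipping $\mathscr{S}(M)$ with the grading $\mathscr{S}(M)_{\bar 0}=M$, $\mathscr{S}(M)_{\bar 1}=\overline M$, one checks $\theta,\partial_\theta$ act with odd degree, proving (i). For (ii), I would send a $\mathcal{D}$-module map $f\colon M\to N$ to $\mathscr{S}(f)$ defined by $\mathscr{S}(f)(v)=f(v)$ and $\mathscr{S}(f)(\overline v)=\overline{f(v)}$; $\mathcal{SD}$-linearity is clear on each summand for $\mathcal{D}$ and follows from \eqref{S2} for $\theta,\partial_\theta$, and functoriality ($\mathscr{S}(\mathrm{id})=\mathrm{id}$, $\mathscr{S}(g\circ f)=\mathscr{S}(g)\circ\mathscr{S}(f)$) is immediate.

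\emph{Part (iii).} Let $M$ be irreducible and $0\ne N\subseteq\mathscr{S}(M)$ an $\mathcal{SD}$-submodule. Pick $0\ne w=v+\overline u\in N$ with $v,u\in M$; then $\overline v=\theta w\in N$ and $u=\partial_\theta w\in N$. If $v\ne 0$, irreducibility of $M$ gives $\mathcal{D}v=M$, hence $\overline M=\overline{\mathcal{D}v}=\mathcal{D}\overline v\subseteq N$ by \eqref{S1}, and then $M=\partial_\theta\overline M\subseteq N$, so $N=\mathscr{S}(M)$. If $v=0$, then $u\ne0$ and the symmetric argument ($M=\mathcal{D}u\subseteq N$, then $\overline M=\theta M\subseteq N$) again yields $N=\mathscr{S}(M)$. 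Thus $\mathscr{S}(M)$ is irreducible.

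\emph{Part (iv).} I would introduce the even idempotent $e=\theta\partial_\theta\in\mathcal{SD}$; the Clifford relations give $e^2=e$, $1-e=\partial_\theta\theta$, $e\theta=\theta$, $\theta e=0$, $\partial_\theta e=\partial_\theta$, $e\partial_\theta=0$, and $e$ commutes with $\mathcal{D}$ since $\theta$ and $\partial_\theta$ do. Given an irreducible $\mathcal{SD}$-module $V$, set $M:=(1-e)V$: this is a $\mathcal{D}$-submodule, $\theta|_{M}\colon M\to eV$ and $\partial_\theta|_{eV}\colon eV\to M$ are mutually inverse $\mathcal{D}$-module isomorphisms (by $\partial_\theta\theta=1-e$ and $\theta\partial_\theta=e$), while $\theta|_{eV}=0$ and $\partial_\theta|_{M}=0$; comparing with \eqref{S1}--\eqref{S2} and identifying $eV$ with $\overline M$ via $\theta$ gives $V\cong\mathscr{S}(M)$, and $M$ is irreducible over $\mathcal{D}$ because a proper nonzero $\mathcal{D}$-submodule $M_0\subsetneq M$ would yield the proper nonzero $\mathcal{SD}$-submodule $M_0\oplus\theta M_0$ of $V$. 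Finally, any $\mathcal{SD}$-isomorphism $\mathscr{S}(M)\cong\mathscr{S}(M')$ restricts to a $\mathcal{D}$-isomorphism $(1-e)\mathscr{S}(M)=M\to M'=(1-e)\mathscr{S}(M')$, so $M\cong M'$; together with (iii) this gives the bijection. (Alternatively one may invoke $\mathcal{SD}\cong\mathcal{D}\otimes\mathrm{Cl}$ with $\mathrm{Cl}\cong M_{1|1}(\mathbb{C})$ simple, so $\mathcal{SD}\cong M_{1|1}(\mathcal{D})$ is Morita equivalent to $\mathcal{D}$, and $\mathscr{S}$ realizes the equivalence on irreducibles.)

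\emph{Main obstacle.} The only point needing real care is the reconstruction step in (iv), and within it the $\mathbb{Z}_2$-grading bookkeeping: $\mathscr{S}(M)$ and its parity reversal $\Pi\mathscr{S}(M)$ agree as ungraded $\mathcal{SD}$-modules but are related by an odd isomorphism, so the asserted one-to-one correspondence must be read up to the parity functor (or relative to a fixed convention on morphisms). The remaining verifications are routine.
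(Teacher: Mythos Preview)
The paper does not prove this proposition at all: it is quoted from \cite{CDLP} and stated without argument, so there is no ``paper's own proof'' to compare against. Your proposal is therefore a self-contained proof supplied where the paper gives none.

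On substance, your argument is correct. Parts (i)--(iii) are the expected direct checks, and your treatment of (iv) via the even idempotent $e=\theta\partial_\theta$ (equivalently, the Morita equivalence $\mathcal{SD}\cong M_{1|1}(\mathcal{D})$) is the standard and cleanest route. Your closing remark about the $\mathbb{Z}_2$-grading is well taken and worth keeping: the bijection in (iv) is literally true only once one fixes the convention $\mathscr{S}(M)_{\bar 0}=M$ (which the paper implicitly does throughout), or else reads the correspondence up to the parity functor $\Pi$; without such a convention $\mathscr{S}(M)$ and $\Pi\mathscr{S}(M)$ would give two $\mathcal{SD}$-modules for each $\mathcal{D}$-module $M$.
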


\section{Constructions of functors}

In this section, we introduce a family of functors that map modules over the Weyl algebra  to modules over the N=2  superconformal algebra.

 Let $\mathcal{SD}^-$ denote the Lie superalgebra associated to the associative superalgebra $\mathcal{SD}$ with the following bracket:
$$
[x,y]=xy-(-1)^{|x||y|}yx,\quad\forall x,y\in \mathcal{SD}.
$$
Let $\mathcal{SW}$ denote the Witt superalgebra, which is the subalgebra of $\mathcal{SD}^-$, spanned by the super differential operator of order 1:
$$L_m^\prime=t^mD,\quad  H_m^\prime= t^m\theta\partial_\theta,\quad Q_m=t^m\partial_\theta,\quad P_m= t^m\theta D,$$
with the following brackets:
\begin{eqnarray*}
&&[L_m^\prime, L_n^\prime]=(n-m)L_{m+n}^\prime,\\
&&[L_m^\prime,H_n^\prime]=nH_{m+n}^\prime,
\\
&&  [L_m^\prime,Q_n]=nQ_{m+n},\\
&& [L_m^\prime,P_n]=(n-m) P_{m+n},\\
&&[H_m^\prime,Q_n]=-Q_{m+n},
\\
&& [H_m^\prime,P_n]=P_{m+n},\\
&&[Q_m,P_n]=L_{m+n}^\prime+mH_{m+n}^\prime,
\\&&[Q_m,Q_n]=[P_m,P_n]=[H_m^\prime,H_n^\prime]=0.
\end{eqnarray*}

The following lemma can be verified easily.

\begin{lemm}[\cite{DVV}]\label{Iso}
The centerless $N=2$ superconformal algebra $\mathcal{G}$ is isomorphic to the Witt superalgebra $\mathcal{SW}$. This can be shown through the map $\varpi: {\mathcal{G}}\rightarrow {\mathcal{SW}}$, which is defined as follows:
\begin{eqnarray}\label{weq121}
&&\varpi(L_m)= -t^{m}\left( D+\frac{m}{2}\theta\partial_\theta\right),\
\varpi(H_m)=t^{m}\theta\partial_\theta,
\
\varpi(G_m^+)=-2t^{m}\theta D,\
\varpi(G_m^-)= t^{m}\partial_\theta,
\end{eqnarray}
where $m\in\Z$.
\end{lemm}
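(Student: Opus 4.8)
The plan is to verify that $\varpi$ is a well-defined homomorphism of Lie superalgebras and that it is a linear bijection. Since $\mathcal{G}=\hat{\mathcal{G}}/\C C$ is given by a distinguished basis and structure constants, it is enough to check that the images $\varpi(L_m),\varpi(H_m),\varpi(G_m^{\pm})$ satisfy the (centerless) defining relations of $\mathcal{G}$ on all pairs of basis elements, and then to invert $\varpi$ on the generators.

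First I would record the images in the basis $\{L_m^\prime,H_m^\prime,Q_m,P_m\}$ of $\mathcal{SW}$: namely $\varpi(L_m)=-L_m^\prime-\frac{m}{2}H_m^\prime$, $\varpi(H_m)=H_m^\prime$, $\varpi(G_m^+)=-2P_m$ and $\varpi(G_m^-)=Q_m$. The parities already match, as $L_m^\prime,H_m^\prime$ are even and $P_m,Q_m$ are odd; note also that the odd indices of $\mathcal{G}$ are here taken to be integers, consistent with $\mathcal{G}=\hat{\mathcal{G}}[0]/\C C$.

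Next, using the bracket table for $\mathcal{SW}$ recorded above, I would check each defining relation of $\mathcal{G}$ in turn. The relations $[H_m,H_n]=0$, $[L_m,H_n]=-nH_{m+n}$, $[H_m,G_p^{\pm}]=\pm G_{m+p}^{\pm}$ and $[G_p^{\pm},G_q^{\pm}]=0$ are immediate. For $[L_m,L_n]=(m-n)L_{m+n}$ one expands $[-L_m^\prime-\frac{m}{2}H_m^\prime,\,-L_n^\prime-\frac{n}{2}H_n^\prime]=(n-m)L_{m+n}^\prime+\frac{n^2-m^2}{2}H_{m+n}^\prime$ and recognizes this as $(m-n)\varpi(L_{m+n})$; the relation $[L_m,G_p^{\pm}]=(\frac{m}{2}-p)G_{m+p}^{\pm}$ is handled the same way, the coefficient $\frac{m}{2}$ in $\varpi(L_m)$ being precisely what converts the $\mathcal{SW}$ coefficients into the required ones. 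The one relation that genuinely uses the whole structure is $[G_p^-,G_q^+]=2L_{p+q}-(p-q)H_{p+q}$: here the left side maps to $-2[Q_p,P_q]=-2L_{p+q}^\prime-2pH_{p+q}^\prime$, while the right side maps to $-2L_{p+q}^\prime-(p+q)H_{p+q}^\prime-(p-q)H_{p+q}^\prime=-2L_{p+q}^\prime-2pH_{p+q}^\prime$, so the two agree.

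Finally, since $L_m^\prime=-\varpi(L_m)-\frac{m}{2}\varpi(H_m)$, $H_m^\prime=\varpi(H_m)$, $P_m=-\frac{1}{2}\varpi(G_m^+)$ and $Q_m=\varpi(G_m^-)$, the images of the distinguished basis of $\mathcal{G}$ form a basis of $\mathcal{SW}$; hence $\varpi$ is bijective and therefore an isomorphism of Lie superalgebras. The work is entirely bookkeeping: the points to watch are the super-sign in $[Q_m,P_n]$ (an anticommutator of odd operators) and the minus signs in $\varpi(L_m)$ and $\varpi(G_m^+)$. If the bracket table for $\mathcal{SW}$ is not taken as given, computing it inside $\mathcal{SD}^-$ (using $Dt^m=t^m(D+m)$ and $\partial_\theta\theta=1-\theta\partial_\theta$, with $D$ commuting with $\theta$ and $\partial_\theta$) would be the natural preliminary step; in any case there is no real obstacle.
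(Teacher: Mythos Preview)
Your proposal is correct and is exactly the kind of direct verification the paper has in mind: the paper offers no proof beyond the remark that the lemma ``can be verified easily'' (citing \cite{DVV}), and your check of each defining relation against the $\mathcal{SW}$ bracket table, together with the observation that $\varpi$ is invertible on the basis, carries this out in full. Your computations, including the key identity $[\varpi(G_p^-),\varpi(G_q^+)]=-2L_{p+q}^\prime-2pH_{p+q}^\prime=\varpi\big(2L_{p+q}-(p-q)H_{p+q}\big)$, are all correct.
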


We denote the topological $N=2$ superconformal algebra by $\mathcal{T}$.
\begin{rema}
To illustrate the relationships among these Lie superalgebras, we present the following commutative diagram:
\begin{displaymath}
\xymatrix{
  \mathcal{G}[0] \ar[d]_{\delta} \ar[r]^{\varpi}
                & \mathcal{SW} \ar[d]^{\cong}  \\
  \mathcal{G}[\frac{1}{2}] \ar@{.>}[ur]|-{\varpi\circ\delta^{-1}} \ar[r]_{\varpi\circ\delta^{-1}}
                & \mathcal{T}             }
                \end{displaymath}
\end{rema}

\begin{lemm}\label{lek22}

For an $\mathcal{SD}$-module $V$ and $b\in\mathbb{C}$, we can define $V$ as a $\mathcal{G}$-module using the following actions:
\begin{eqnarray}\label{wq33}
&& L_m\cdot v=-t^{m}\big( D+mb+\frac{m}{2}\theta\partial_\theta\big)v,
\\&& \label{wq34}H_m\cdot v=t^{m}\big(-2b+\theta\partial_\theta\big)v,
\\&&\label{wq35}G_m^+\cdot v=-2t^{m}\big(\theta D+2bm\theta\big)v,
\\&&\label{wq36}G_m^-\cdot
 v=\big(t^{m}\partial_\theta\big)v,\label{wq36}
\end{eqnarray}
for $m\in\mathbb{Z}$  and $v\in V$. This $\mathcal G$-module is denoted by $\mathscr{X}_b(V)$.
\end{lemm}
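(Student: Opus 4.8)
The plan is to verify directly that the four families of operators in \eqref{wq33}--\eqref{wq36} satisfy all the defining relations of the centerless $N=2$ superconformal algebra $\mathcal{G}$, treating $C$ as acting by zero. Since Lemma \ref{Iso} already identifies $\mathcal{G}$ with the Witt superalgebra $\mathcal{SW}\subset \mathcal{SD}^-$ via $\varpi$, the cleanest route is to exhibit the formulas \eqref{wq33}--\eqref{wq36} as a $b$-deformation of $\varpi$ and check that the deformation terms are compatible with the bracket. Concretely, I would write $L_m\mapsto \varpi(L_m) - mb\,t^m$, $H_m\mapsto \varpi(H_m) - 2b\,t^m$, $G_m^+\mapsto \varpi(G_m^+) - 4bm\,t^m\theta$, $G_m^-\mapsto \varpi(G_m^-)$, where $t^m$, $t^m\theta$ are regarded as elements of $\mathcal{SD}$ (multiplication operators). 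Note $t^m = t^m(\theta\partial_\theta + \partial_\theta\theta)$, so these extra terms live in $\mathcal{SD}$ and act on any $\mathcal{SD}$-module $V$.

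The key computational steps, carried out bracket by bracket in $\mathcal{SD}^-$ (i.e. using the supercommutator of operators on $V$): first, $[t^mD, t^n] = nt^{m+n}$ and $[t^mD, t^n\theta] = nt^{m+n}\theta$, together with $[t^m, t^n]=[t^m,t^n\theta]=[t^m\theta,t^n\theta]=0$ and $[\theta\partial_\theta, t^n\theta] = t^n\theta$, $[\theta\partial_\theta,t^n]=0$; these are all immediate from the Weyl/Clifford relations. Using these, I check:
\begin{itemize}
\item[(i)] $[L_m,L_n] = (m-n)L_{m+n}$: the $\varpi$-part gives this by Lemma \ref{Iso}, and the cross terms $-nb\,t^m\cdot(-t^n D\text{-part}) $ reorganize to $-( m-n)\cdot (m+n)b\, t^{m+n}$, matching the deformation in $L_{m+n}$ (here one uses that the $\theta\partial_\theta$ piece of $\varpi(L_m)$ commutes with $t^n$).
\item[(ii)] $[L_m,H_n] = -nH_{m+n}$ and $[H_m,H_n]=0$: same pattern, the deformation terms are pure multiplication operators and contribute consistently.
\item[(iii)] $[H_m,G_p^\pm]=\pm G_{m+p}^\pm$, $[L_m,G_p^\pm]=(\tfrac{m}{2}-p)G_{m+p}^\pm$: here one must track the $2bm\theta$ and $4bm\,t^m\theta$ corrections; the bracket $[\,\cdot\,,G^+]$ produces new $t^{m+p}\theta$-terms whose coefficients must assemble into $4b(m+p)$, which is where the precise coefficients $mb$, $-2b$, $2bm$ in \eqref{wq33}--\eqref{wq36} are pinned down.
\item[(iv)] $[G_p^-,G_q^+] = 2L_{p+q} - (p-q)H_{p+q}$: this is the most delicate anticommutator; the undeformed part follows from $\varpi$ (after rescaling, $G^-=\partial_\theta$, $G^+=-2\theta D$), and I must verify the extra terms $-4bq\,t^q\theta$ hitting $\partial_\theta$ and $\partial_\theta$ hitting nothing produce exactly $-4bq\,t^{p+q}$, which should match $2\cdot(-(p+q)b)\,t^{p+q} - (p-q)\cdot(-2b)\,t^{p+q}$ after using $p+q$ is the correct index; a short algebraic identity $-4bq = -2(p+q)b + 2(p-q)b$ closes this.
\item[(v)] $[G_p^+,G_q^+]=[G_p^-,G_q^-]=0$: immediate since $\theta^2=0$, $\partial_\theta^2=0$, and $t^m\theta\cdot t^n\theta$-type products vanish.
\end{itemize}

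I would organize the writeup so that (i)--(ii) are dispatched by a single remark ("the purely even relations hold because the $b$-deformation only adds central-looking multiplication operators, and the computation reduces to the $\mathfrak{sl}_2$-type identity already in Lemma \ref{Iso}"), and then do (iii), (iv), (v) in slightly more detail since those are where $b$ genuinely interacts with the odd part. Throughout, functoriality is automatic: an $\mathcal{SD}$-module homomorphism intertwines multiplication by $t^m, t^m\theta$ and the operators $D, \partial_\theta, \theta\partial_\theta$, hence intertwines the $\mathcal{G}$-action, so $\mathscr{X}_b$ is a functor from ${\rm Mod}\,\mathcal{SD}$ to ${\rm Mod}\,\mathcal{G}$.

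The main obstacle I anticipate is bookkeeping in the anticommutator $[G_p^-,G_q^+]$ and in the mixed brackets $[L_m,G_p^\pm]$, where three sources of $\theta$-terms (the $\theta D$ piece, the $2bm\theta$ piece, and the shift in the index $m\mapsto m+p$) must be reconciled; the risk is a dropped factor of $2$ coming from the conventions $\varpi(G^+_m) = -2t^m\theta D$ and $H_m$ carrying the $-2b$ rather than $-b$. I would therefore fix all sign/scaling conventions against Lemma \ref{Iso} at $b=0$ first, confirm $\mathscr{X}_0$ reproduces $\varpi$ exactly, and only then introduce $b$, so that every deformation term can be checked in isolation against the single scalar identity it is responsible for.
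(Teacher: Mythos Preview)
Your proposal is correct, but it takes a different route from the paper's proof. You verify each defining relation of $\mathcal{G}$ directly in $\mathcal{SD}^-$, writing each operator as $\varpi(X)$ plus a multiplication-operator correction and then checking bracket by bracket that the corrections match (your identity $-4bq = -2(p+q)b + 2(p-q)b$ for the $[G_p^-,G_q^+]$ case is exactly what is needed). The paper instead packages all of this into a single structural statement: it introduces the extended Lie superalgebra $\tilde{\mathcal{G}}=\mathcal{G}\ltimes\mathcal{A}$, embeds it into $\mathcal{SD}^-$ via $\Phi$, and then defines a linear map $\sigma_b:\tilde{\mathcal{G}}\to\tilde{\mathcal{G}}$ that shifts each generator by the same multiplication operators you identified. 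The claim that $\sigma_b$ is an automorphism of $\tilde{\mathcal{G}}$ absorbs every one of your bracket checks at once; the $\mathcal{G}$-module $\mathscr{X}_b(V)$ is then just the twist of the restricted $\tilde{\mathcal{G}}$-module by $\sigma_b$. So the underlying computations are equivalent, but the paper's formulation explains \emph{why} the $b$-deformation works (it is a genuine automorphism of the semidirect product, essentially a $1$-cocycle with values in $\mathcal{A}$), whereas your approach is more hands-on and self-contained, requiring no auxiliary algebra beyond $\mathcal{SD}^-$ itself. Your functoriality remark is fine and matches the paper's Proposition \ref{Fun}.
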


\begin{proof}Consider the extended  algebra $\tilde{\mathcal{G}}= {\mathcal{G}} \ltimes \mathcal A$   with the
brackets
$$[x,y]=-(-1)^{|x||y|}[y,x]=x(y),\ [y,y^\prime]=0,\quad \forall x\in \mathcal{G},y,y^\prime\in \mathcal A.$$
By Lemma \ref{Iso},  $\tilde{\mathcal{G}}$ can be embedded into the Lie supealgebra $\mathcal{SD}^-$ associated to the associative superalgebra $\mathcal{SD}$
by the following  linear map $\Phi: \tilde{\mathcal{G}}\hookrightarrow \mathcal{SD}^-$ with
\begin{eqnarray*}
&&\nonumber L_m\longmapsto -t^{m}\big( D+\frac{m}{2}\theta\partial_\theta\big),\\&&
H_m\longmapsto t^{m}\theta\partial_\theta,
\\&&
G_m^+\longmapsto-2t^{m}\theta D,
\\&&
G_m^-\longmapsto t^{m}\partial_\theta,
\\&&  t^m\mapsto t^m,
\\&&t^m\theta\mapsto t^m\theta,
\end{eqnarray*}
where $m\in\Z.$  For $b\in \mathbb{C}$,  we define a linear map $\sigma_{b}: \tilde{\mathcal{G}}\to \tilde{\mathcal{G}}$ by
 \begin{eqnarray*}
\sigma_b(L_m)&=&-t^{m}D-\frac{m}{2}t^m\theta\partial_{\theta}-mbt^{m},\\
\label{LI4455e2.2} \sigma_b(H_m)&=&t^{m}\theta\partial_\theta-2bt^m,\\
\sigma_b(G^+_m)&=& -2t^{m}\theta D-4bmt^m\theta,\\
\sigma_b(G^-_m)&=&t^m\partial_\theta,\\
\sigma_b(t^m)&=& {t^m},\\
\sigma_b(t^m\theta)&=& t^m\theta,
\end{eqnarray*}
for $m\in\Z$. From the isomorphism $\sigma_{b}$ of $\tilde{\mathcal{G}}$, it is evident that $\sigma_{b}$ induces a natural $\mathcal{SD}^-$-module structure on any $\mathcal{SD}$-module $V$. As a result, $V$ can be viewed as a $\tilde{\mathcal{G}}$-module using the isomorphism $\Phi$. We can define a new action of $\tilde{\mathcal{G}}$ on $V$ by setting $x\cdot v = \sigma_{b}(x)v$ for all $x\in\tilde{\mathcal{G}}$ and $v\in V$. This new module, denoted by $\mathscr{X}_b(V)$, is referred to as the twisted module of $V$ by $\sigma_{b}$.  It is worth noting that the module $\mathscr{X}_b(V)$ can be seen as a $\mathcal{G}$-module by restricting the action of $\tilde{\mathcal{G}}$ to the subalgebra $\mathcal{G}$. Then
\begin{eqnarray*}
&& L_m\cdot v=\sigma_b(L_m)v=-t^{m}\big( D+mb+\frac{m}{2}\theta\partial_\theta\big)v,
\\&& \label{wq442.2}H_m\cdot v=\sigma_b(H_m)v=t^{m}\big(-2b+\theta\partial_\theta\big)v,
\\&&\label{wq442.3}G_m^+\cdot v=\sigma_b(G_m^+)v=-2t^{m}\big(\theta D+2bm\theta\big)v,
\\&&\label{wq442.4}G_m^-\cdot v=\sigma_b(G_m^-)v=\big(t^{m}\partial_\theta\big)v,
\end{eqnarray*}
for $ m\in\Z,v\in V$.

\end{proof}

\begin{prop}\label{Fun}
For $b\in\mathbb{C}$, there is a functor $\mathscr{X}_b:{\rm Mod}\,\mathcal{SD}\to {\rm Mod}\,\mathcal{G}$ that maps $V$ to $\mathscr{X}_b(V)$, defined by equations (\ref{wq33})-(\ref{wq36}).
\end{prop}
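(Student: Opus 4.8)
The plan is to show that $\mathscr{X}_b$, as defined on objects by Lemma \ref{lek22}, extends to a functor by specifying its action on morphisms in the obvious way and checking the functorial axioms. Concretely, given an $\mathcal{SD}$-module homomorphism $f\colon V\to W$, I would define $\mathscr{X}_b(f)\colon \mathscr{X}_b(V)\to\mathscr{X}_b(W)$ to be the very same underlying linear map $f$, now regarded as a map between the $\mathcal{G}$-module structures supplied by (\ref{wq33})--(\ref{wq36}). Since identity maps and composites of linear maps are preserved tautologically, the only genuine content is that $\mathscr{X}_b(f)$ really is a $\mathcal{G}$-module homomorphism, and that it respects the $\Z_2$-grading (so that it is a morphism in the category of $\mathcal{G}$-modules in the super sense).

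The key step is therefore the homomorphism verification. I would argue it at the level of the extended algebra $\tilde{\mathcal G}={\mathcal G}\ltimes\mathcal A$ and the twisting automorphism $\sigma_b$ introduced in the proof of Lemma \ref{lek22}. Because the $\mathcal{SD}$-action on $\mathscr{X}_b(V)$ is by definition $x\cdot v=\Phi(\sigma_b(x))v$ for $x\in\tilde{\mathcal G}$, and $f$ intertwines the $\mathcal{SD}$-actions on $V$ and $W$ by hypothesis, we get for every generator $x\in\{L_m,H_m,G_m^{\pm}\}$ and every $v\in V$
\begin{equation*}
f(x\cdot v)=f\bigl(\Phi(\sigma_b(x))v\bigr)=\Phi(\sigma_b(x))f(v)=x\cdot f(v),
\end{equation*}
which is exactly the assertion that $\mathscr{X}_b(f)$ commutes with the $\mathcal{G}$-action defined by (\ref{wq33})--(\ref{wq36}); one may also check this directly from the explicit formulas, since each right-hand side is built from the $\mathcal{SD}$-operators $t^m$, $D$, $\theta$, $\partial_\theta$ applied to $v$, all of which $f$ commutes with. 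For the grading, note that the operators appearing in (\ref{wq33})--(\ref{wq34}) are even and those in (\ref{wq35})--(\ref{wq36}) are odd, so the $\Z_2$-grading of $\mathscr{X}_b(V)$ coincides with that of $V$ and $f$ preserves it automatically; thus $\mathscr{X}_b(f)$ is a morphism of $\mathcal{G}$-modules.

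Finally I would record the functoriality identities $\mathscr{X}_b(\mathrm{id}_V)=\mathrm{id}_{\mathscr{X}_b(V)}$ and $\mathscr{X}_b(g\circ f)=\mathscr{X}_b(g)\circ\mathscr{X}_b(f)$, which hold on the nose because $\mathscr{X}_b$ does not alter the underlying linear maps, only the module structures on source and target. I do not expect any serious obstacle here: the construction in Lemma \ref{lek22} already did the hard work of verifying that (\ref{wq33})--(\ref{wq36}) define a Lie superalgebra action, and the present proposition is essentially a bookkeeping statement. The only point requiring a little care is making sure the twist $\sigma_b$ (or equivalently the explicit formulas) is applied uniformly on both $V$ and $W$, so that the same linear map $f$ serves as the morphism — but this is immediate from the definitions.
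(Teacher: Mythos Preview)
Your proposal is correct and follows essentially the same approach as the paper. The paper's proof is just a terser, more categorical phrasing of what you wrote: it observes that $\mathscr{X}_b$ is the composite of the forgetful functor ${\rm Mod}\,\mathcal{SD}\to{\rm Mod}\,\mathcal{SD}^-$ with the pullback functors along the Lie superalgebra maps $\mathcal{G}\xrightarrow{\iota}\tilde{\mathcal{G}}\xrightarrow{\sigma_b}\tilde{\mathcal{G}}\xrightarrow{\Phi}\mathcal{SD}^-$, and since each of these is automatically a functor, so is their composite---your explicit verification that $f(x\cdot v)=f(\Phi(\sigma_b(x))v)=\Phi(\sigma_b(x))f(v)=x\cdot f(v)$ is precisely what underlies this.
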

\begin{proof}
The sequence of Lie superalgebras
 $$
\mathcal{G} \stackrel{\iota}{\longrightarrow}  \tilde{\mathcal{G}} \stackrel{\sigma_b}{\longrightarrow}  \tilde{\mathcal{G}} \stackrel{\Phi}{\longrightarrow} \mathcal{SD}^-,
 $$
induces a sequence of  categories of modules for the Lie superalgebras
$$
{\rm Mod}\,\mathcal{SD}^-\to {\rm Mod}\,\tilde{\mathcal{G}}\to {\rm Mod}\,\tilde{\mathcal{G}}\to {\rm Mod}\,{\mathcal{G}}.
$$
It is clear that each $\mathcal{SD}$ serves as a $\mathcal{SD}^-$-module, and each $\tilde{\mathcal{G}}$-module is also a $\mathcal{G}$-module.
It follows from Proposition \ref{lek22} that the functor $\mathscr{X}_b$ can be written as the composition of the following five functors as follows:
$$
\mathscr{X}_b:{\rm Mod}\,\mathcal{SD}\to{\rm Mod}\,\mathcal{SD}^-\to {\rm Mod}\,\tilde{\mathcal{G}}\to {\rm Mod}\,\tilde{\mathcal{G}}\to {\rm Mod}\,{\mathcal{G}}.
$$
\end{proof}

The main theorem in this section can be derived by utilizing Proposition \ref{S} and Proposition \ref{Fun}.

\begin{theo}\label{M}
For $b\in\mathbb{C}$, there exists a functor
$\mathscr{F}_b:{\rm Mod}\,\mathcal{D}\to {\rm Mod}\,\mathcal{G}$ with
$
M\mapsto \mathscr{F}_b(M),
$
where $\mathscr{F}_b(M)=\mathscr{X}_b \mathscr{S}(M)$.
\end{theo}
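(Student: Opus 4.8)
The plan is to obtain $\mathscr{F}_b$ by straightforward composition of two functors already established: the functor $\mathscr{S}:{\rm Mod}\,\mathcal{D}\to {\rm Mod}\,\mathcal{SD}$ from Proposition \ref{S}, and the functor $\mathscr{X}_b:{\rm Mod}\,\mathcal{SD}\to {\rm Mod}\,\mathcal{G}$ from Proposition \ref{Fun}. Since the composition of two functors is again a functor, the existence of $\mathscr{F}_b = \mathscr{X}_b\circ\mathscr{S}$ as a functor ${\rm Mod}\,\mathcal{D}\to {\rm Mod}\,\mathcal{G}$ is immediate. So the only thing that requires any writing is to make this composition explicit and to record the resulting $\mathcal{G}$-action on $\mathscr{F}_b(M) = \mathscr{X}_b(M\oplus\overline M)$ in terms of the original $\mathcal{D}$-module $M$.

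Concretely, I would first recall that for a $\mathcal{D}$-module $M$, Proposition \ref{S} gives $\mathscr{S}(M) = M\oplus\overline M$ with the $\mathcal{SD}$-action determined by \eqref{S1} and \eqref{S2}: the even operators $t^{\pm1}, D$ act diagonally (via their action on $M$ and the transported action on $\overline M$), while $\theta$ interchanges $M$ and $\overline M$ by $\theta\cdot v = \overline v$, $\theta\cdot\overline v = 0$, and $\partial_\theta$ does the reverse, $\partial_\theta\cdot v = 0$, $\partial_\theta\cdot\overline v = v$. Then I would substitute these into the defining formulas \eqref{wq33}--\eqref{wq36} for $\mathscr{X}_b$. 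This makes $\theta\partial_\theta$ act as the projection onto $M$ (killing $\overline M$), so on the homogeneous pieces one reads off, for $v\in M$ and $\overline v\in\overline M$, formulas such as $L_m\cdot v = -(D+mb+\tfrac m2)t^m v$ and $L_m\cdot\overline v = -(D+mb)t^m\overline v$, with $H_m$, $G_m^\pm$ handled the same way; $G_m^+$ lands in $\overline M$ and $G_m^-$ in $M$. I would present these explicit formulas as they will be needed in Sections 4 and 5.

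I do not anticipate a genuine obstacle here, since both ingredient functors are already in hand; the only point requiring mild care is bookkeeping of the $\mathbb{Z}_2$-grading — making sure that $\mathscr{F}_b(M)$ is presented as a $\mathbb{Z}_2$-graded $\mathcal{G}$-module with even part $M$ and odd part $\overline M$, and that the odd generators $G_m^\pm$ indeed shift parity under the substituted action (which follows because $\theta$ and $\partial_\theta$ are odd in $\mathcal{SD}$ and $\mathscr{X}_b$ respects parity). If one wanted to be maximally careful, one would also note that functoriality is preserved under composition on morphisms: a $\mathcal{D}$-module homomorphism $f:M\to N$ induces $\mathscr{S}(f) = f\oplus\overline f$, and then $\mathscr{X}_b(\mathscr{S}(f))$ is the induced $\mathcal{G}$-module map, which is exactly $\mathscr{F}_b(f)$. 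Thus the statement follows by simply composing Proposition \ref{S} and Proposition \ref{Fun}, and the substance of the proof is the explicit computation of the action formulas.
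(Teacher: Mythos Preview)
Your approach is exactly the paper's: the proof there is the single sentence that $\mathscr{F}_b$ is obtained by composing Proposition~\ref{S} and Proposition~\ref{Fun}, so nothing more is needed for the theorem itself.

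That said, the sample explicit formulas you sketch are miscomputed, and since you plan to carry them into Sections~4 and~5 this matters. From \eqref{S2} one has $\theta\partial_\theta\cdot v=0$ for $v\in M$ and $\theta\partial_\theta\cdot\overline v=\overline v$ for $\overline v\in\overline M$, so $\theta\partial_\theta$ is the projection onto $\overline M$, not onto $M$ as you write; your formulas for $L_m\cdot v$ and $L_m\cdot\overline v$ are therefore swapped. There is also an operator-ordering slip: \eqref{wq33} has $t^m$ to the left of $D$, so the correct expressions are $L_m\cdot v=-t^m(D+mb)v$ and $L_m\cdot\overline v=-t^m(D+mb+\tfrac{m}{2})\overline v$, matching the examples in Section~5.
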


\section{Properties of functors}
In this section, we will study the properties of irreducibility preservation for these functors and explore the natural isomorphisms that exist between them.

\subsection{Preservation of irreducibility}

\begin{lemm}\label{lemm2.3}Let $V$ be an $\mathcal{SD}$-module, $b\neq0,\frac{1}{2}$. For any $k\in\mathbb{Z}$  and $d\in\Z^*$, we define $T_{k,d}$  as follows:
\begin{eqnarray*}
T_{k,d} &=& \frac{1}{4d^2}(L_{-d}\cdot G^+_{k+d}+L_{d}\cdot G^+_{k-d}-2L_0\cdot G^+_{k})
\end{eqnarray*}
  in $U(\mathcal{G})$. Then, for all   $v \in V$, we have:
$$\frac{1}{b(1-2b)}T_{k,d}\cdot v= t^{k}\theta v.$$
\end{lemm}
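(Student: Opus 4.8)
The plan is to compute $L_{\pm d}\cdot G^+_{k\pm d}\cdot v$ and $L_0\cdot G^+_k\cdot v$ directly using the explicit $\mathcal{G}$-action from Lemma \ref{lek22} (equations \eqref{wq33}--\eqref{wq36}), then take the prescribed linear combination. First I would record that on $\mathscr{X}_b(V)$ we have $G^+_j\cdot v = -2t^j(\theta D + 2bj\theta)v$ and $L_i\cdot w = -t^i(D + ib + \tfrac{i}{2}\theta\partial_\theta)w$. Since these operators live inside the associative superalgebra $\mathcal{SD}$, I would just multiply the two differential operators, using the commutation rules $D t^m = t^m(D+m)$, $\theta^2 = 0$, $\partial_\theta\theta + \theta\partial_\theta = 1$, and $\partial_\theta t = \theta t^{-1} = t^{-1}\theta$ etc. The key simplification is that any term carrying a leading $\theta$ that is then multiplied on the left by another $\theta$ dies; so $G^+_j\cdot v$ already has the form $t^j\theta(\text{something})v$, and applying $L_i$ to it, the $\theta\partial_\theta$ piece of $L_i$ hits $\theta(\cdots)$ giving back $\theta(\cdots)$ (since $\partial_\theta\theta = 1 - \theta\partial_\theta$ and the trailing part has no $\theta$), while the $D$ and $ib$ pieces just act as scalars shifted by the exponent.

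Concretely I expect $L_i\cdot G^+_j\cdot v = 2 t^{i+j}\theta\bigl((D+j)(D+i+j+ib) + 2bj(D+i+j+ib) - \tfrac{i}{2}(D+j) - bij\bigr)v$ after collecting; one must be careful with the sign from moving $\theta$ past $D$ and with the $\tfrac{i}{2}\theta\partial_\theta$ term acting on $\theta D v$ versus $\theta v$. I would then substitute $i=-d,\,j=k+d$; $i=d,\,j=k-d$; and $i=0,\,j=k$ into this expression, and form $\frac{1}{4d^2}(\text{first}+\text{second}-2\cdot\text{third})$. Because the three exponents $t^{i+j}$ all equal $t^k$, the whole thing becomes $t^k\theta\,P(D)v$ where $P(D)$ is a polynomial in $D$ (with coefficients depending on $b,k,d$). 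The combination $\frac{1}{4d^2}(f(-d)+f(d)-2f(0))$ is precisely the second-order symmetric difference, so it annihilates the constant and linear-in-$d$ parts of $f$ and extracts $\tfrac12$ times the coefficient of $d^2$; since $f(d)$ is at most quadratic in $d$ here (the $D^2$ terms and the $D$-linear terms that depend on $d$ need checking), $T_{k,d}$ should come out \emph{independent of} $d$ and independent of $k$ beyond the overall $t^k\theta$, equal to $b(1-2b)\,t^k\theta$ as an operator — whence dividing by $b(1-2b)$ (legitimate since $b\neq 0,\tfrac12$) gives the claim.

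The main obstacle, and the only real content, is the bookkeeping in the first step: correctly commuting the odd operators and the Euler operator $D$ through the Laurent monomials and verifying that all the $D$-dependent terms cancel in the symmetric difference, leaving only the scalar $b(1-2b)$. In particular I would double-check that the coefficient of $D^2$ (which is $d$-independent) is killed because it contributes equally to all three terms, that the coefficient of $D$ has no $d^2$ part, and that the genuine $d^2$ contribution comes solely from cross terms like $(D+j)\cdot ib$ with $j = k\pm d$ and from $2bj\cdot ij$-type products, i.e. from the places where both $i=\mp d$ and $j = k\pm d$ appear multiplicatively. Once the polynomial $P(D)$ is written down explicitly for general $i,j$, the substitution and the symmetric-difference cancellation are mechanical, so I would present $P(D)$, then the three specializations, then the one-line observation about $\frac{1}{4d^2}(f(-d)+f(d)-2f(0))$ extracting the quadratic coefficient, and conclude.
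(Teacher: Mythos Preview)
Your approach is exactly the paper's: compute $L_n\cdot G^+_{k-n}\cdot v$ as an explicit element of $\mathcal{SD}$ acting on $v$, observe it is a degree-two polynomial in $n$ of the form $t^k\theta\bigl(A(D)+B(D)n+b(1-2b)n^2\bigr)v$, and then note that the second symmetric difference $\frac{1}{4d^2}\bigl(g(-d)+g(d)-2g(0)\bigr)$ kills the constant and linear parts and returns $b(1-2b)\,t^k\theta v$. The only suggestion is to parametrize from the outset by a single variable $n$ with $i=n$, $j=k-n$ (as the paper does), which makes the ``polynomial in $n$'' structure immediate and avoids tracking $i,j$ separately; also, the aside ``$\partial_\theta t=\theta t^{-1}$'' is a slip---$\partial_\theta$ and $t$ simply commute in $\mathcal{SD}$.
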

\begin{proof}
For any $n,k\in\Z$, we have
 \begin{eqnarray*}
\frac{1}{2}L_n\cdot G^+_{k-n}\cdot v
&=&\Big(t^{n} D+nb t^{n}+\frac{n}{2}t^{n}\theta\partial_\theta\Big)\Big(t^{k-n}\theta D+2(k-n)b t^{k-n}\theta\Big)v
\\&=&\Big(t^{k}\theta D^2+(2b+1)kt^{k}\theta D
+2bk^2t^{k} \theta
\\&&+\Big((-\frac{1}{2}-b)t^{k}\theta D+kb(2b-3)t^{k}\theta\Big)n
+b(1-2b)n^2t^{k}\theta\Big)v.
\end{eqnarray*}
 Choosing $n=-d,0,d$ respectively in the above equation, we obtain that

 $$\frac{1}{ b(1-2b)}T_{k,d}\cdot v=t^{k}\theta v.$$
\end{proof}

\begin{lemm}\label{th36544433}
Let $b\in \mathbb{C}$ and $V$ be an irreducible module over the Weyl superalgebra $\mathcal{SD}$. Then the $\mathcal{G}$-module $\mathscr{X}_b(V)$ is irreducible if  $b\notin\{0,\frac{1}{2}\}$.
\end{lemm}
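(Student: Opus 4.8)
The plan is to show that any nonzero $\mathcal{G}$-submodule $W$ of $\mathscr{X}_b(V)$ must be all of $V$, by first using the $\mathcal{G}$-action to recover enough of the Weyl superalgebra action on $W$, and then invoking irreducibility of $V$ as an $\mathcal{SD}$-module. The crucial input is Lemma \ref{lemm2.3}: since $b\neq 0,\tfrac12$, the scalar $b(1-2b)$ is nonzero, so the operator $t^k\theta$ (acting via the $\mathcal{SD}$-module structure) is realized on $\mathscr{X}_b(V)$ as a fixed element $\tfrac{1}{b(1-2b)}T_{k,d}$ of $U(\mathcal{G})$, for every $k\in\mathbb{Z}$. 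In particular, $W$ is automatically stable under all the operators $t^k\theta$.

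First I would collect the other Weyl-superalgebra generators. From \eqref{wq36}, $G_m^-$ acts as $t^m\partial_\theta$, so $W$ is stable under all $t^m\partial_\theta$; composing $t^{-k}\partial_\theta$ with $t^k\theta$ (and using $\partial_\theta\theta+\theta\partial_\theta=1$) recovers the even operators $t^j$ for all $j\in\mathbb{Z}$ as well as $\theta\partial_\theta$ and $\partial_\theta\theta = 1-\theta\partial_\theta$ up to the already-available pieces; combining these with \eqref{wq33}--\eqref{wq34} lets me solve for $t^mD$: indeed $L_m\cdot v = -t^m(D+mb+\tfrac m2\theta\partial_\theta)v$, and since $t^m$, $t^m\theta\partial_\theta$, and scalars are all available as operators preserving $W$, the operator $t^mD$ preserves $W$ too. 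Similarly $G_m^+$ together with the $t^k\theta$'s yields $t^m\theta D$. At this point $W$ is invariant under $t^m$, $t^mD$, $t^m\theta$, $t^m\partial_\theta$, $t^m\theta\partial_\theta$, $t^mD\theta\partial_\theta$, $t^mD\theta$, $t^mD\partial_\theta$ for all $m$ — but these are precisely the generators (in fact a spanning set for order $\le 1$ operators, and $D^l$ for higher $l$ is obtained by iterating $t^{-1}D$ against $tD$, etc.) of $\mathcal{SD}$ as described right after the definition of $\mathcal{SD}$. Hence $W$ is an $\mathcal{SD}$-submodule of the irreducible module $V$, forcing $W=V$.

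The one genuine subtlety — and the step I expect to be the main obstacle — is the bookkeeping needed to generate \emph{all} of $\mathcal{SD}$, not merely its degree-$\le 1$ part: I must check that from $tD$ and $t^{-1}D$ (and $t^{\pm 1}$) one can build $D^l$ for every $l\in\mathbb{N}$ inside the associative algebra they generate, so that the full PBW-type basis $t^kD^l,\ t^kD^l\theta\partial_\theta,\ t^kD^l\theta,\ t^kD^l\partial_\theta$ lies in the subalgebra of $\mathcal{SD}$ acting on $W$. This is a routine commutator computation: $[t^{-1}D,\,tD]$ produces $D$ up to lower-order terms already in hand, and iterating produces all $D^l$; the odd pieces then come for free by multiplying by $\theta$ or $\partial_\theta$, which are available. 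Once that is in place, the argument closes: $W$ is a nonzero $\mathcal{SD}$-submodule of the irreducible $\mathcal{SD}$-module $V$, so $W=V$, and therefore $\mathscr{X}_b(V)$ is irreducible whenever $b\notin\{0,\tfrac12\}$. I would also note in passing that the restriction $d\in\mathbb{Z}^*$ in Lemma \ref{lemm2.3} is harmless — any single nonzero $d$ suffices to produce each $t^k\theta$.
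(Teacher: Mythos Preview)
Your proposal is correct and follows essentially the same strategy as the paper's proof: both hinge on Lemma~\ref{lemm2.3} to realize $t^k\theta$ as $\tfrac{1}{b(1-2b)}T_{k,d}\in U(\mathcal{G})$, and then recover the remaining $\mathcal{SD}$-generators from $G_m^-$, $L_m$, $H_m$. The paper packages this as showing a chosen $v\in V_{\bar 0}$ is cyclic (via $T_{k,d}$, $G_0^-$, and $(-L_0)^n$), whereas you phrase it as any $\mathcal{G}$-submodule being $\mathcal{SD}$-stable; note also that the step you flag as the ``main obstacle'' dissolves immediately, since by \eqref{wq33} with $m=0$ the operator $L_0$ already acts as $-D$, so $D^l=(-L_0)^l$ preserves $W$ without any commutator gymnastics.
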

\begin{proof}

Suppose that  $b\neq0,\frac{1}{2}.$ Choose a nonzero element $v$ from  $V_{\bar0}$. For any   $k\in\mathbb{Z}$, and nonzero integer $d$, we can use the linear operator  $T_{k,d}$  defined in Lemma \ref{lemm2.3} to obtain the following results:
\begin{eqnarray}\label{322}
&&\frac{1}{b(1-2b)}T_{k,d}\cdot v=\overline{t^{k}v}\in U(\mathcal{G})v,
\\&&\frac{1}{b(1-2b)}G_{0}^-\cdot T_{k,d}  \cdot v=t^{k} v\in U(\mathcal{G})v.
\end{eqnarray}
For $n\in\N$, one has
\begin{eqnarray}\label{266}
(-L_0)^n\cdot v= D^nv\in U(\mathcal{G})v.
\end{eqnarray}
From \eqref{322}-\eqref{266}, we can conclude that $U(\mathcal{G})v=\mathscr{X}_b(V).$ This implies that $\mathscr{X}_b(V)$ is an irreducible $\mathcal{G}$-module.
\end{proof}

\begin{lemm}\label{lemm200}
Let   $M$ be an irreducible module over the Weyl algebra $\mathcal{D}$.
 \begin{itemize}
\item[\rm (i)]  The $\mathcal{G}$-module $\mathscr{F}_{0}(M)$ is reducible  if and only if   $M$ is isomorphic to the $\mathcal{D}$-module $\C[t^{\pm1}]$;
\item[{\rm (ii)}]
If $M=\C[t^{\pm1}]$, then $\mathscr{F}_{0}(\C[t^{\pm1}])$ has a maximum submodule $\C$. Consequently, the factor module $\mathscr{F}_{0}(\C[t^{\pm1}])/\C$ is an irreducible $\mathcal{G}$-module.
 \end{itemize}
\end{lemm}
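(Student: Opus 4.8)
The plan is to analyze $\mathscr{F}_{0}(M) = \mathscr{X}_0\mathscr{S}(M)$ explicitly. Recall $\mathscr{S}(M) = M \oplus \overline{M}$ as an $\mathcal{SD}$-module, and with $b=0$ the formulas (\ref{wq33})--(\ref{wq36}) simplify considerably: on $v \in M$ we get $L_m v = -t^m D v$, $H_m v = t^m\theta\partial_\theta v = 0$ (since $\theta\partial_\theta$ kills $M$), $G_m^+ v = -2t^m\theta D v = \overline{-2t^m D v}$, $G_m^- v = t^m\partial_\theta v = 0$; and on $\overline{v} \in \overline{M}$ we get $L_m\overline{v} = -t^m(D + \tfrac12)\overline{v}$ (using $\theta\partial_\theta\overline{v} = \overline{v}$), $H_m\overline{v} = \overline{v}$... wait, let me instead just record $H_m\overline v = \overline{t^m v}$, $G_m^+\overline v = 0$ (since $\theta\overline v = 0$), $G_m^-\overline v = \overline{\partial_\theta \overline v}$... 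I would carefully recompute: $G_m^-\overline v = t^m\partial_\theta\cdot\overline v = \overline{t^m}\cdot(\partial_\theta\overline v) = t^m v$ lands in $M$. So $\overline{M}$ surjects onto $M$ under the $G^-$-action and $M$ maps into $\overline M$ under the $G^+$-action (with a factor involving $D$). The first task is to write these four families of actions down cleanly and observe the obvious structural feature: $L_0$ acts on $M \cong \mathbb{C}[t^{\pm1}]$-torsion-free or torsion modules, and the $D$-eigenvalue structure controls everything.

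Next I would prove (i). For the ``if'' direction: when $M = \mathbb{C}[t^{\pm1}]$, realize $D = t\partial_t$ annihilates the constant $1 \in \mathbb{C}[t^{\pm1}]$, so on the copy $M$ we have $L_m\cdot 1 = -t^m D(1) = 0$, $G_m^+\cdot 1 = -2\overline{t^m D(1)} = 0$, $H_m \cdot 1 = 0$, $G_m^-\cdot 1 = 0$; hence $\mathbb{C}\cdot 1 \subseteq M \subseteq \mathscr{F}_0(M)$ is a nonzero proper $\mathcal{G}$-submodule, so $\mathscr{F}_0(\mathbb{C}[t^{\pm1}])$ is reducible. For the ``only if'' direction I would argue the contrapositive: assume $M$ is an irreducible $\mathcal{D}$-module not isomorphic to $\mathbb{C}[t^{\pm1}]$, and show $\mathscr{F}_0(M)$ is irreducible. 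Take any nonzero homogeneous $w$ in a submodule $W$. If $w = \overline v \in \overline M$, then $G_0^-\cdot\overline v = v \in W$, bringing us into $M$; if $w = v \in M$, then since $M \not\cong \mathbb{C}[t^{\pm1}]$ the operator $D$ acts injectively on $M$ (this is the key point: for $M$ irreducible over $\mathcal{D}$, either $M$ is torsion-free so $D - \lambda$ is injective... actually one must check $D$ itself is injective — if $Dv = 0$ for some $v \ne 0$ then $\mathcal{D}v$ is a submodule and by Lemma \ref{211} identifying such $M$ forces $M \cong \mathbb{C}[t^{\pm1}]$, the unique irreducible $\mathcal{D}$-module with a $D$-kernel; I would spell this out using the classification). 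So from $v \in M \cap W$ with $v \ne 0$, apply $G_m^+$ to get $\overline{-2t^m D v} \in W$ for all $m$, and since $Dv \ne 0$ and $\mathcal{D}\cdot(Dv) = M$ by irreducibility, we recover all of $\overline M \subseteq W$; then $G_0^-$ on $\overline M$ recovers all of $M$. Hence $W = \mathscr{F}_0(M)$.

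For (ii): with $M = \mathbb{C}[t^{\pm1}]$, I claim the submodule generated by anything outside $\mathbb{C}\cdot 1$ is everything, which shows $\mathbb{C}\cdot 1$ is the unique maximal submodule. Concretely, a nonzero homogeneous $w \notin \mathbb{C}\cdot 1$: if $w = \overline{v} \in \overline M$ (any $v \ne 0$), then $G_0^-\overline v = v \in \mathbb{C}[t^{\pm1}] = M$, and one more application of the machinery of part (i) — note $Dv$ can be zero only if $v \in \mathbb{C}\cdot 1$, but then $w = \overline{1}$, and $G_m^+$ applied repeatedly or direct computation shows $\overline{1}$ still generates everything since $L_m\cdot\overline 1 = -t^m(D+\tfrac12)\overline 1 = -\tfrac12 \overline{t^m} = -\tfrac12\overline{t^m}$, giving all of $\overline M$, then $G^-$ gives all of $M$; if instead $v \notin \mathbb{C}\cdot 1$ then $Dv \ne 0$ and we proceed as in (i). Either way $U(\mathcal{G})w = \mathscr{F}_0(M)$. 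Combined with the computation in (i) that $\mathbb{C}\cdot 1$ is a submodule, and that it is the only proper nonzero one, $\mathscr{F}_0(\mathbb{C}[t^{\pm1}])/\mathbb{C}$ is irreducible.

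The main obstacle I anticipate is the ``only if'' direction of (i): one must pin down precisely when the action of $D$ (equivalently $-L_0$, up to the shift on $\overline M$) has a kernel on an irreducible $\mathcal{D}$-module and show this forces $M \cong \mathbb{C}[t^{\pm1}]$ — this requires invoking the classification in Lemma \ref{211} and checking that among the torsion modules $\Omega(\lambda)$ and the torsion-free quotients $\mathcal{D}/(\mathcal{D}\cap\mathbb{C}(t)[D]\tau)$, the only one on which $D$ fails to be injective is $\mathbb{C}[t^{\pm1}]$ itself. A secondary subtlety is handling the two homogeneous components carefully (a general element of $W$ need not be homogeneous, so one first splits off homogeneous pieces using that $L_0$ acts semisimply or, more carefully, that the $\mathbb{Z}_2$-grading lets us pass to a homogeneous element).
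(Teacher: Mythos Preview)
Your proposal is correct and follows the same overall architecture as the paper (identify $\C[t^{\pm1}]$ as the unique irreducible $\mathcal{D}$-module on which $D$ has a kernel, exhibit $\C\cdot1$ as a submodule in that case, and in the complementary case show any nonzero homogeneous element generates everything), but the key generating step in the ``only if'' part of (i) is handled differently. The paper does not use $G_m^+$ to pass from $M$ to $\overline{M}$; instead it works on the $\overline{M}$ side and introduces a second-order operator
\[
Q_{m,d}=\frac{2}{d^2}\bigl(L_{-d}L_{m+d}+L_dL_{m-d}-2L_0L_m\bigr),
\]
the $b=0$ analogue of $T_{k,d}$ from Lemma~\ref{lemm2.3}, and verifies $Q_{m,d}\cdot\overline{w}=\overline{t^m w}$. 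Your direct use of $G_m^+v=-2\,\overline{t^m Dv}$ is shorter and avoids that side computation, at the price of needing $Dv\ne0$, which is exactly where your characterization ``$M\not\cong\C[t^{\pm1}]\Leftrightarrow D$ injective on $M$'' enters (the paper phrases the same dichotomy as ``$D$ nilpotent on some element''). For (ii) the paper simply writes out the $\mathcal{G}$-action on the quotient basis $\{t^n,\overline{t^k}:n\in\Z^*,k\in\Z\}$ and reads off irreducibility; your argument that any homogeneous element outside $\C\cdot1$ generates the whole module is equivalent in content.

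One minor slip to correct: from (\ref{wq33}) with $b=0$ the action on $\overline{v}$ is $L_m\cdot\overline{v}=-t^m\bigl(D+\tfrac{m}{2}\bigr)\overline{v}$, not $-t^m(D+\tfrac12)\overline{v}$, so $L_m\cdot\overline{1}=-\tfrac{m}{2}\,\overline{t^m}$. This does not affect your argument (you still obtain $\overline{t^m}$ for all $m\ne0$; alternatively $H_m\cdot\overline{1}=\overline{t^m}$ works for every $m$ and is cleaner).
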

\begin{proof}
{\rm (i)} Let $\overline{v}$ be any nonzero element in  $\overline{M}$. Now we consider the action of $D$ on $\overline{v}$.
\begin{case}\label{case1}
The
action of $D$ on $\overline{v}$ is nilpotent.
\end{case}
It is evident that there is a non-zero element $\overline{v}\in\overline{M}$ such that $\overline{Dv}=0$. This implies that $\overline{D({t}^k{v})}=k\overline{t^k{v}}$. Therefore, we can conclude that $V$ is isomorphic to the $\W$-module $\C[t^{\pm1}]\oplus\overline{\C[{t}^{\pm1}]}$. It's easy to verify that the $\mathcal{G}$-module $\mathscr{F}_{0}(M)$ has a one-dimensional submodule, namely, $\C$, and hence it's reducible.

\begin{case}
The
action of $ D$ on $\overline{v}$ is non-nilpotent.
\end{case}
We can choose a non-zero $
\overline{w}=\overline{D^kv}\in \overline{M},
$ where $k\in\Z_+$.  For $m\in\Z,d\in\Z^*$, we define
$$
Q_{m,d}=\frac{2}{d^2}(L_{-d}\cdot L_{m+d}+L_{d}\cdot L_{m-d}-2
L_0\cdot L_{m}).$$
By the similar calculation in Lemma \ref{lemm2.3}, we can get   $$Q_{m,d}\cdot\overline{w}=\overline{t^m w}\in U(\mathcal{G}){v},
$$ where $m\in\Z,d\in\Z^*$.  Using this, one gets
$$G_{0}^-\cdot(\overline{t^mw})=t^mw\in U(\mathcal{G}){v}.$$
 For $i\in\N$, one has
$$
(-L_0)^i\cdot \overline{w}= \overline{D^iw}\in U(\mathcal{G}){v}.
$$
  Thus, $\mathscr{F}_{0}(M)=U(\mathcal{G}){v}$. In other words,  $\mathscr{F}_{0}(M)$ is an irreducible $\mathcal{G}$-module  in this case.

{\rm (ii)}
It is clear that $\big(\C[t^{\pm1}]\oplus\overline{\C[{t}^{\pm1}]}\big)/ \C=\mathrm{span}_{\C}\{t^n, \overline{t^k} \mid n \in\Z^*,k\in\Z\}$. For
$n\in\Z^*,k\in\Z$, $m\in\Z$, we have the nontrivial relations
\begin{eqnarray*}
&&L_m\cdot t^n=-t^m D(t^n)=-nt^{m+n},
\\&&L_m\cdot\overline{t^k}=-(t^m D+\frac{m}{2}t^m\theta\partial_\theta)\overline{t^k}=-(k+\frac{m}{2})(\overline{t^{m+k}}),
\\
&&
H_m\cdot \overline{t^k}=(t^m\theta\partial_\theta)\overline{t^k}=\overline{t^{m+k}},
\\&& G_m^+\cdot t^n=(-2t^{m}\theta D)t^n=-2n\overline{t^{m+n}},
\\&& G_m^-\cdot\overline{t^k}=(t^{m}\partial_\theta)\overline{t^k}=t^{m+k}.
\end{eqnarray*}
This implies  that $\big(\C[t^{\pm1}]\oplus\overline{\C[{t}^{\pm1}]}\big)/ \C$ is an irreducible $\mathcal{G}$-module.
\end{proof}

Suppose that $M$ is an irreducible $\mathcal{D}$-module, we define
$$
\overline{\mathscr{F}_{0}(M)}=\begin{cases}
&\mathscr{F}_{0}(M)/\C,\quad \text{if}\quad  M\cong \C[t^{\pm1}];\\
&\mathscr{F}_{0}(M),\quad\quad\text{otherwise}.
\end{cases}
$$

Assume that  $\sigma\in \mathrm{Aut}(\mathcal{G})$ with
\begin{eqnarray*}
\sigma(L_m)=L_m,\ \sigma(H_m)=-H_m,\ \sigma(G^+_m)=-2G^-_m,
\ \sigma(G_m^-)=-\frac{1}{2}G_m^+,
\end{eqnarray*}
where $m\in\Z.$

Let   $V$ be any  module of $\mathcal{G}$. We can
make $V$ into another $\mathcal{G}$-module, by defining the new action of $\mathcal{G}$ on $V$ as
$$x\ast v=\sigma(x)v,$$ where $x\in \mathcal{G}$. We denote the new module by
$V^\sigma$, and call it as the twisted module of $V$ by $\sigma$.

\begin{lemm}\label{lemm201}
Let $M$ be an irreducible $\mathcal{D}$-module. Then the following holds.
\begin{itemize}
    \item[\rm (i)]  $\mathscr{F}_{\frac{1}{2}}(M)$ is an irreducible $\mathcal{G}$-module if and only if $D(M) = M$.
    \item[\rm (ii)]If $D(M) = M$, then $\mathscr{F}_{\frac{1}{2}}(M)\cong\Pi\circ \overline{\mathscr{F}_{0}(M)}^\sigma$, where $\Pi$ is the parity change functor for $\mathcal{G}$-modules and
    $\overline{\mathscr{F}_{0}(M)}$ as   $\mathcal{G}$-modules.
\end{itemize}

\end{lemm}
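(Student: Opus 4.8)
The plan is to exploit the explicit formulas for the $\mathscr{X}_{b}$-action at $b=\frac12$ and to compare them with the $b=0$ case via the automorphism $\sigma$. First I would unwind $\mathscr{F}_{1/2}(M)=\mathscr{X}_{1/2}\mathscr{S}(M)$: as a vector space it is $M\oplus\overline M$, and I would write down the $L_m,H_m,G_m^{\pm}$ actions obtained by substituting $b=\tfrac12$ into \eqref{wq33}--\eqref{wq36} together with the Clifford relations \eqref{S1}--\eqref{S2}. The key observation for (i) is that at $b=\tfrac12$ the term $L_m\cdot v=-t^m(D+\tfrac m2+\tfrac m2\theta\partial_\theta)v$ becomes, on the even part $M$, simply $-t^m(D+\tfrac m2)v$, and in particular $L_0$ acts on $v\in M$ as $-D$; so if $D(M)\neq M$ then $D$ (hence $U(\mathcal G)$) cannot reach all of $M$ from a single vector whose image under $D$ lies in a proper submodule. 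More precisely, I would argue that $D(M)\subsetneq M$ produces a proper $\mathcal G$-submodule, by checking that the subspace $D(M)\oplus\overline{M}$ (or the appropriate analogue dictated by the formulas) is $\mathcal G$-stable — the operators $G_m^-=t^m\partial_\theta$ send $\overline M\to M$ but only land in $t^mM$, and one must verify that all structure maps respect the filtration by $D$; conversely, when $D(M)=M$ I would run the same cyclicity argument as in Lemma \ref{lemm200}, Case 2, now valid because the obstruction at $b=0$ (the one-dimensional submodule $\C\subset\mathscr{F}_0(\C[t^{\pm1}])$) is exactly the failure of $D$ to be surjective on $\C[t^{\pm1}]$, and that failure is what $D(M)=M$ rules out.

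For part (ii), the plan is to produce an explicit isomorphism $\Phi:\mathscr{F}_{1/2}(M)\to \Pi\circ\overline{\mathscr{F}_0(M)}^{\sigma}$. Since $D(M)=M$, $M\not\cong\C[t^{\pm1}]$ (as $D$ is not surjective on $\C[t^{\pm1}]$), so $\overline{\mathscr{F}_0(M)}=\mathscr{F}_0(M)=M\oplus\overline M$ with the $b=0$ formulas. I would define $\Phi$ by a map that swaps the two graded components (this accounts for $\Pi$) and possibly rescales, i.e.\ something like $\Phi(v)=\overline v$, $\Phi(\overline v)= c\,v$ for a suitable constant $c$, and then check that the $\sigma$-twisted $b=0$ action matches the $b=1/2$ action. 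The automorphism $\sigma$ is tailored for this: $\sigma(H_m)=-H_m$ flips the eigenvalue $-2b+\theta\partial_\theta$ from the $b=0$ pattern to the $b=\tfrac12$ pattern once the components are swapped (note $\theta\partial_\theta$ and $\partial_\theta\theta=1-\theta\partial_\theta$ interchange under the swap), and $\sigma(G_m^+)=-2G_m^-$, $\sigma(G_m^-)=-\tfrac12 G_m^+$ interchange the roles of $\theta D$ and $\partial_\theta$ compatibly with the swap of $M$ and $\overline M$. The bookkeeping amounts to verifying four identities, one for each generator family, matching $\sigma(x)$ acting via the $b=0$ formula against $x$ acting via the $b=1/2$ formula under $\Phi$.

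I would organize the write-up as: (1) record the $b=1/2$ action formulas explicitly on $M\oplus\overline M$; (2) prove the "only if" direction of (i) by exhibiting the proper submodule when $D(M)\subsetneq M$; (3) prove the "if" direction by the cyclic-vector argument, reusing the $T_{k,d}$ and $Q_{m,d}$ gadgets from Lemmas \ref{lemm2.3} and \ref{lemm200}; (4) construct $\Phi$ for (ii) and verify equivariance generator-by-generator. The main obstacle I expect is step (2): pinning down exactly which subspace is $\mathcal G$-stable when $D(M)\subsetneq M$. The subtlety is that $G_m^+$ and $G_m^-$ mix $M$ and $\overline M$ and involve $D$ and multiplication by $t^m$, so the naive guess "$D(M)\oplus\overline M$" may need adjusting — one likely needs the submodule generated by $\overline M$, and to show it is proper one must track how $D$-surjectivity fails through the Clifford action; getting this submodule description right (and confirming it is proper precisely when $D(M)\neq M$) is where the real work lies. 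A secondary nuisance is confirming $M\not\cong\C[t^{\pm1}]$ under the hypothesis $D(M)=M$, needed to unambiguously identify $\overline{\mathscr{F}_0(M)}$ in (ii); this follows since $D\cdot 1=0$ shows $1\notin D(\C[t^{\pm1}])$.
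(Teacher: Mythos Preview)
Your overall architecture matches the paper's: write out the $b=\tfrac12$ actions on $M\oplus\overline M$, exhibit a proper submodule when $D(M)\neq M$, run a cyclicity argument when $D(M)=M$, and build an explicit $\sigma$-twisted isomorphism. However, two of your concrete guesses are wrong, and without the correct ones the proof does not go through.

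\emph{The proper submodule in (i).} Your candidate $D(M)\oplus\overline{M}$ is not $\mathcal G$-stable: already $G_0^-\cdot\overline v=v$ lands in $M$, not in $D(M)$. Your fallback ``submodule generated by $\overline M$'' is the whole module, since $G_0^-$ maps $\overline M$ onto $M$. The correct submodule is the one generated by the \emph{even} part, namely $M\oplus\overline{D(M)}$. The key identity is that at $b=\tfrac12$ one has $G_m^+\cdot v=-2\,\overline{D(t^m v)}$ and $L_m\cdot\overline v=-\,\overline{D(t^m v)}$ (use $t^m(D+m)=Dt^m$), so every odd element produced by $\mathcal G$ from $M$ already lies in $\overline{D(M)}$; and $G_m^-\cdot\overline{Dw}=t^m Dw\in M$ harmlessly. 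This submodule is proper precisely when $D(M)\neq M$.

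\emph{The cyclicity step.} The operator $T_{k,d}$ is useless here: by Lemma~\ref{lemm2.3}, $T_{k,d}\cdot v=b(1-2b)\,t^k\theta v$, which vanishes at $b=\tfrac12$. The paper does not need any second-order gadget: $-H_k\cdot v=t^k v$ recovers the $\C[t^{\pm1}]$-action, $(-L_0)^i\cdot v=D^i v$ recovers powers of $D$, and $-\tfrac12 G_k^+\cdot v=\overline{D(t^k v)}$ hits $\overline{D(M)}=\overline{M}$.

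\emph{The isomorphism in (ii).} A constant-swap $\Phi(v)=\overline v$, $\Phi(\overline v)=c\,v$ cannot work. Check $G_m^+$: on the left, $\Phi(G_m^+\cdot_{1/2} v)=\Phi(-2\,\overline{D(t^m v)})=-2c\,D(t^m v)$; on the right, $\sigma(G_m^+)\cdot_0\Phi(v)=-2G_m^-\cdot_0\overline v=-2t^m v$. Equality would force $cD=\mathrm{id}$, impossible for a scalar $c$. The paper's map is $\phi(v)=\overline v$, $\phi(\overline{Dv})=v$; i.e.\ on the odd part it is $\overline w\mapsto D^{-1}w$. This is well-defined because $D(M)=M$ forces $M\not\cong\C[t^{\pm1}]$ (your argument), and for irreducible $M\not\cong\C[t^{\pm1}]$ the operator $D$ is injective (a nonzero kernel vector would generate a copy of $\C[t^{\pm1}]$). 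With this $\phi$, the generator checks go through: the presence of $D$ in the $b=\tfrac12$ formulas for $G_m^+$ and for $L_m$ on $\overline M$ is exactly absorbed by $\phi^{-1}$.

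So the strategy is right, but steps (2) and (4) need the specific corrections above; without them the argument stalls at the points you yourself flagged as uncertain.
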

\begin{proof}
{\rm(i)}
Consider   $M\oplus\overline{M}$ as a $\Z_2$-graded modules over $\W$. Let $v\in M$ and $\overline{v} \in \overline{M}$. For
any $m\in\Z$,  $\mathcal{G}$-module $\mathscr{F}_{\frac{1}{2}}(M)$ is defined by

\begin{eqnarray*}
&& L_m\cdot v=-\big(t^{m} D+\frac{m}{2} t^{m}\big)v,
\\&&L_m\cdot\overline{v}=-\big(t^{m} D+m t^{m}\big)\cdot\overline{v}=-\overline{D t^{m}v},
\\&& H_m\cdot v=-t^mv,
\\&&G_m^{+}\cdot v=-2\big(t^{m} D\theta+
mt^m\theta\big)\cdot v=-2\overline{D t^{m} v},
\\&&G_m^{-}\cdot\overline{v}= t^{m}v, \\
&&\ H_m\cdot \overline{v}=G_m^{+}\cdot\overline{v}=G_m^{-}\cdot v=0.
\end{eqnarray*}
We see that $(\mathscr{F}_{\frac{1}{2}}(M))_{\bar0}\oplus D(\mathscr{F}_{\frac{1}{2}}(M))_{\bar1}$ is a $\mathcal{G}$-submodule, namely, $M=D(M)$. For $k\in\Z$  and $v\in M$, we have $-H_k\cdot v=t^{k}v\in U(\mathcal{G})v$,  and $-\frac{1}{2}G_k^+\cdot v=\overline{Dt^{k}v}\in U(\mathcal{G})v$.
For any $i\in\mathbb{N}$ and $v\in M$, we have $(-L_0)^i\cdot v= D^iv\in U(\mathcal{G})v$.  Evidently, $(\mathscr{F}_{\frac{1}{2}}(M))_{\bar 0}\oplus D  (\mathscr{F}_{\frac{1}{2}}(M))_{\bar 1}$ is
an irreducible $\mathcal{G}$-submodule of $\mathscr{F}_{\frac{1}{2}}(M)$.

{\rm(ii)}  If $\mathscr{F}_{0}(M)$ is  irreducible,   we define the following linear   map
 \begin{eqnarray*}
\phi:\quad (\mathscr{F}_{\frac{1}{2}}(M))_{\bar0}\oplus D(\mathscr{F}_{\frac{1}{2}}(M))_{\bar1}\quad&\longrightarrow& \Pi(\mathscr{F}_{0}(M)^\sigma)
\\ v&\longmapsto&\overline{v}
\\  \overline{Dv}&\longmapsto& v,
\end{eqnarray*}
where $v\in M,\overline{v}\in\overline{M}$.
For $m\in\Z$, one can check that
 $\phi$ is a $\mathcal{G}$-module isomorphism.

If $\mathscr{F}_{0}(M)$ is  reducible, from Case $1$ in Lemma \ref{lemm200} (i),  one gets $$\overline{\mathscr{F}_{0}(M)}=\big(\C[t^{\pm1}]\oplus\overline{\C[{t}^{\pm1}]}\big)/ \C.$$ We see that the
following linear map $\psi$ is a $\mathcal{G}$-module isomorphism, where
\begin{eqnarray*}
\psi:\quad (\mathscr{F}_{\frac{1}{2}}(M))_{\bar0}\oplus D(\mathscr{F}_{\frac{1}{2}}(M))_{\bar1}\quad&\longrightarrow& \overline{\mathscr{F}_{0}(M)}^\sigma
\\t^{m}&\longmapsto&\overline{t^m}
\\  \overline{D t^n}&\longmapsto& t^n,
\end{eqnarray*}
for $ m\in\Z,n\in\Z^*.$
\end{proof}

The main theorem in this paper can be derived by utilizing  Lemmas \ref{lemm2.3}-\ref{lemm201}.

\begin{theo}\label{M1}
For $b\in\mathbb{C}$, the functor
$\mathscr{F}_b$  preserves irreducible $\mathcal{D}$-modules  if and only if $b\notin\{0,\frac{1}{2}\}$.
\end{theo}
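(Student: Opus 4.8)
The plan is to combine the preservation results already proved for the two functors in the composition $\mathscr{F}_b=\mathscr{X}_b\mathscr{S}$ with the exceptional-case analysis of Lemmas \ref{lemm200} and \ref{lemm201}. For the ``if'' direction, suppose $b\notin\{0,\tfrac12\}$ and let $M$ be an irreducible $\mathcal{D}$-module. By Proposition \ref{S}(iii) the $\mathcal{SD}$-module $\mathscr{S}(M)$ is irreducible, and then Lemma \ref{th36544433} shows that $\mathscr{X}_b(\mathscr{S}(M))=\mathscr{F}_b(M)$ is an irreducible $\mathcal{G}$-module. This is the short half of the argument and requires essentially no new work beyond quoting the cited statements.

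For the ``only if'' direction I would argue by contraposition: I must exhibit, for each of $b=0$ and $b=\tfrac12$, at least one irreducible $\mathcal{D}$-module $M$ whose image $\mathscr{F}_b(M)$ is reducible. For $b=0$ this is immediate from Lemma \ref{lemm200}(i): taking $M=\C[t^{\pm1}]$ (which is an irreducible $\mathcal{D}$-module by Lemma \ref{211}(ii), being $\Omega(1)$ up to the standard identification, or more directly since the only submodules of $\C[t^{\pm1}]$ as a $\mathcal{D}$-module are $0$ and itself) gives $\mathscr{F}_0(M)$ reducible, indeed with the explicit one-dimensional submodule $\C$. For $b=\tfrac12$ I would invoke Lemma \ref{lemm201}(i): $\mathscr{F}_{1/2}(M)$ is irreducible if and only if $D(M)=M$, so it suffices to produce an irreducible $\mathcal{D}$-module $M$ with $D(M)\neq M$. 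Again $M=\C[t^{\pm1}]$ works, since $D=t\,d/dt$ kills constants but otherwise acts by $D(t^n)=n t^n$, so $D(\C[t^{\pm1}])=\operatorname{span}_\C\{t^n\mid n\in\Z^*\}\subsetneq\C[t^{\pm1}]$; hence $\mathscr{F}_{1/2}(\C[t^{\pm1}])$ is reducible (its submodule being $(\mathscr{F}_{1/2}(M))_{\bar0}\oplus D(\mathscr{F}_{1/2}(M))_{\bar1}$ as identified in the proof of Lemma \ref{lemm201}(i)).

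Putting these together: if $b\notin\{0,\tfrac12\}$ then $\mathscr{F}_b$ sends every irreducible $\mathcal{D}$-module to an irreducible $\mathcal{G}$-module, while if $b\in\{0,\tfrac12\}$ then $\mathscr{F}_b(\C[t^{\pm1}])$ is reducible, so $\mathscr{F}_b$ fails to preserve irreducibility. I do not anticipate a genuine obstacle here, since all the substantive content — the irreducibility computation for generic $b$ via the operators $T_{k,d}$, and the structure of the exceptional modules — has already been established in the preceding lemmas; the only point requiring a little care is confirming that $\C[t^{\pm1}]$ really is irreducible over $\mathcal{D}$ (so that it is a legitimate test object) and that it does lie in the image-relevant class, but this is standard and follows from Lemma \ref{211}. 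The proof is therefore essentially a bookkeeping assembly of Lemmas \ref{lemm2.3}--\ref{lemm201}.
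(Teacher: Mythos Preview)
Your proposal is correct and matches the paper's approach exactly: the paper's proof is the single sentence ``derived by utilizing Lemmas \ref{lemm2.3}--\ref{lemm201}'', and you have simply spelled out how those lemmas assemble into both directions. One small slip: $\C[t^{\pm1}]$ is \emph{not} $\Omega(1)$ (the latter is $\C[t^{\pm1}]$-torsion while the former is torsion-free), but your alternative direct argument for its irreducibility is fine and the point is immaterial to the proof.
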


\subsection{Isomorphisms
}\label{ir4}

\begin{lemm}\label{lemm20011}
Let $ b_1, b_2\in \C, b_1\notin\{0,\frac{1}{2}\}$ and $M_1,M_2$ be irreducible $\mathcal{D}$-modules.
 Then  $\mathscr{F}_{b_1}(M_1)\cong \mathscr{F}_{b_2}(M_2)$, as   $\mathcal{G}$-modules,  if and only if
 $b_1=b_2$ and   $M_1\cong M_2$, as $\mathcal{D}$-modules.
\end{lemm}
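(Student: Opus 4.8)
The plan is to establish both directions of the equivalence, with the ``if'' direction being essentially immediate from the construction and the ``only if'' direction requiring a careful analysis of how the module structure recovers the parameters. For the easy direction, if $b_1=b_2$ and $M_1\cong M_2$ as $\mathcal D$-modules, then $\mathscr S(M_1)\cong\mathscr S(M_2)$ as $\mathcal{SD}$-modules by functoriality (Proposition \ref{S}), and then $\mathscr X_{b_1}\mathscr S(M_1)\cong\mathscr X_{b_2}\mathscr S(M_2)$ since $\mathscr X_b$ is a functor (Proposition \ref{Fun}); hence $\mathscr F_{b_1}(M_1)\cong\mathscr F_{b_2}(M_2)$.

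For the ``only if'' direction, suppose $\varphi:\mathscr F_{b_1}(M_1)\to\mathscr F_{b_2}(M_2)$ is a $\mathcal G$-module isomorphism. First I would recover $b$ from the module. The key observation is that the operator $L_0$ acts as $-(D+\frac{1}{2}\theta\partial_\theta)$ on $\mathscr X_b(V)$ (no $b$), but $H_0$ acts as $-2b+\theta\partial_\theta$; since $\theta\partial_\theta$ has eigenvalues $0$ and $1$ (on the $M$ and $\overline M$ parts respectively), the operator $H_0$ on $\mathscr F_b(M)$ has spectrum contained in $\{-2b,\,1-2b\}$. Thus the pair of ``generalized $H_0$-eigenvalues'' is $\{-2b,1-2b\}$, an invariant of the $\mathcal G$-module structure, and $\{-2b_1,1-2b_1\}=\{-2b_2,1-2b_2\}$ forces either $b_1=b_2$ or $b_1=\frac12-b_2$. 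To rule out the second possibility (when $b_1\neq b_2$) I would use the $\mathbb Z_2$-grading: a $\mathcal G$-module isomorphism either preserves or reverses parity, and on $\mathscr F_b(M)$ the even part $M$ is exactly the $(-2b)$-eigenspace of $H_0$ while the odd part $\overline M$ is the $(1-2b)$-eigenspace. If $\varphi$ preserves parity, matching even-to-even eigenvalues gives $-2b_1=-2b_2$; if $\varphi$ reverses parity, matching the even part of the source ($H_0$-eigenvalue $-2b_1$) to the odd part of the target ($H_0$-eigenvalue $1-2b_2$) gives $-2b_1=1-2b_2$, i.e. $b_2=\frac12+\frac{2b_1-1}{2}$... more carefully $b_1=\frac12-b_2$, but then since $b_1\notin\{0,\frac12\}$ one still has $b_2\notin\{0,\frac12\}$, so we must separately exclude this by a finer invariant — for instance comparing the action of $G_0^-$ (which maps $\overline M\to M$ with a shift) against $G_0^+$ (which maps $M\to\overline M$ with a $b$-dependent coefficient $2b_1m$), or noting that after the parity swap the roles of $G^+$ and $G^-$ are exchanged in a way incompatible with their defining brackets unless the coefficient $4b_1p^2-1$ versus $4b_2q^2-1$ data in $[G^-_p,G^+_q]$ matches up. The cleanest route is probably: the two parities of $\mathscr F_b(M)$ are distinguished because $G_0^-$ acts injectively on $\overline M$ (it is essentially $\partial_\theta$) but $G_0^+$ does not act injectively on $M$ in general when $D(M)\neq M$, and more robustly the bracket relation $[G^-_p,G^+_q]=2L_{p+q}-(p-q)H_{p+q}+\cdots$ is not symmetric under $G^+\leftrightarrow G^-$, so a parity-reversing isomorphism cannot be a $\mathcal G$-module map. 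Hence $b_1=b_2=:b$.

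Having fixed $b_1=b_2=b$ with $b\notin\{0,\frac12\}$, I would then recover $M$. Restricting $\varphi$ to the even parts (which it now preserves) gives a linear isomorphism $\varphi_0:M\to M'$ (writing $M'=M_2$); I must show it intertwines the $\mathcal D$-actions. The operators $L_0,L_{\pm1}$ and the combinations $T_{k,d}$, together with $G_0^-$, from Lemmas \ref{lemm2.3} and \ref{th36544433} let one express the operators $t^k$, $D^n$, $t^k D^l$ acting on $V_{\bar0}=M$ purely in terms of the $\mathcal G$-action: indeed $(-L_0)^n v=D^n v$, and $\frac{1}{b(1-2b)}G_0^-T_{k,d}v=t^k v$, and composing appropriately recovers $t^k D^l v$. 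Since $\{t^k D^l\}$ spans $\mathcal D$, the map $\varphi_0$ is automatically a $\mathcal D$-module homomorphism; being a bijection it is an isomorphism $M_1\cong M_2$. I expect the main obstacle to be the parity bookkeeping in the ``only if'' direction — precisely, ruling out the spurious solution $b_1=\tfrac12-b_2$ coming from a parity-reversing isomorphism — which requires isolating an invariant that breaks the $G^+\leftrightarrow G^-$ symmetry; once that is handled, the recovery of $b$ and of $M$ are both routine given the formulas already assembled in Lemmas \ref{lemm2.3}--\ref{lemm200}.
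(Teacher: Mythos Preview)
Your approach is correct in outline and in fact takes a somewhat different (and in one respect cleaner) route than the paper. The paper does not use $H_0$ at all: it first applies the operator $T_{k,d}$ to obtain $b_1(1-2b_1)\psi(\overline{t^kv})=b_2(1-2b_2)\overline{t^k\psi(v)}$, then applies $G_0^-$ and sets $k=0$ to deduce $b_1(1-2b_1)=b_2(1-2b_2)$; from this it extracts $\psi(t^kv)=t^k\psi(v)$ and $\psi(\overline{t^kv})=\overline{t^k\psi(v)}$, combines with $L_0$ to get $M_1\cong M_2$ as $\mathcal D$-modules, and only \emph{then} uses $\psi(L_m\cdot v)=L_m\cdot\psi(v)$ (with the now-known intertwining of $t^m$ and $D$) to force $b_1=b_2$. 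Your idea of reading $b$ directly off the $H_0$-eigenvalue on the even part is shorter and works immediately; the recovery of the $\mathcal D$-action via $(-L_0)^n$ and $\frac{1}{b(1-2b)}G_0^-T_{k,d}$ is then essentially the same as the paper's.

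The one genuine confusion in your write-up is the parity discussion. In the paper's conventions (and standard super-module conventions) an isomorphism of $\mathcal G$-modules is an \emph{even} map; parity change is handled by the separate functor $\Pi$. So the ``spurious solution $b_1=\tfrac12-b_2$'' never arises, and your attempt to exclude it is both unnecessary and incorrectly argued: the asymmetry of $[G_p^-,G_q^+]$ is a statement about the algebra, not about module maps, and does not by itself forbid odd module isomorphisms. Once you simply invoke that $\varphi$ preserves parity, your $H_0$ argument gives $-2b_1=-2b_2$ on the nose, and the rest of your plan goes through without further obstacle.
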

\begin{proof}

 The sufficiency is evident, so we only need to prove the necessity. Suppose that $\psi: \mathscr{F}_{b_1}(M_1) \rightarrow \mathscr{F}_{b_2}(M_2)$ is a $\mathcal{G}$-module isomorphism. For any $k\in \mathbb{Z}$, $d\in\mathbb{Z}^*$, and $0\neq v\in M_1$, we have $\psi(T_{k,d}\cdot v)=T_{k,d}\cdot\psi(v)$ by applying Lemma \ref{lemm2.3}. This, together with the same lemma, yields
\begin{eqnarray}\label{eq401}
b_1(1-2b_1)\psi(\overline{t^{k} v})=b_2(1-2b_2)\overline{t^{k} \psi(v)}.
\end{eqnarray}
Note that $b_1\neq 0$ and $b_1\neq\frac{1}{2}$. By Proposition \ref{S}, we see that the action of $\partial_\theta$ on $v\in M_1$ is trivial. Then, using the fact that $\psi (\frac{G_0^-\cdot (\overline{t^{k} v})}{b_2(1-2b_2)})=G_0^-\cdot \frac{\overline{t^k\psi(v)}}{b_1(1-2b_1)}$, we get
\begin{eqnarray}\label{eq411}
\psi\Big(\frac{t^{k} v}{b_2(1-2b_2)}\Big)=\frac{t^{k} \psi(v)}{b_1(1-2b_1)}.
\end{eqnarray}
Setting $k=0$ in \eqref{eq411}, we obtain
\begin{eqnarray}\label{eq422}
b_1(1-2b_1)=b_2(1-2b_2).
\end{eqnarray}
Substituting \eqref{eq422} into \eqref{eq401} and \eqref{eq411}, we immediately obtain
$$\psi(\overline{t^{k}  v})=\overline{t^{k}  \psi(v)} \quad \mathrm{and} \quad \psi(t^{k}  v)=t^{k} \psi(v),$$
where $k\in\mathbb{Z}$. For any $i\in\mathbb{N}$ and $v\in V$, we have $\psi(L_0^i\cdot v)=L_0^i\cdot\psi(v)$, which implies $\psi( D^iv)= D^i\psi(v)$. Therefore, we conclude that $M_1\cong M_2$.

Now, for any $m\in\mathbb{Z}$ and $0\neq v\in M_1$, from $\psi(L_m\cdot v)=L_m\cdot\psi(v)$, it is easy to check that $b_1=b_2$.
\end{proof}

\begin{lemm}\label{lemm201111}
Let  $M_1$ and $M_2$ be irreducible $\mathcal{D}$-modules.
 Then
  $\mathscr{F}_{0}(M_1)\cong \mathscr{F}_{0}(M_2)$ as   $\mathcal{G}$-modules if and only if
    $M_1\cong M_2$ as $\mathcal{D}$-modules.
\end{lemm}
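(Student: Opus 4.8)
The plan is to reduce the statement about $\mathscr{F}_0$ to the already-proved statements for $\mathscr{F}_b$ with $b\notin\{0,\tfrac12\}$, or failing that, to mimic the proof of Lemma \ref{lemm20011} with the torsion operators $Q_{m,d}$ in place of $T_{k,d}$. Recall that $\mathscr{F}_0(M)=\mathscr{X}_0\mathscr{S}(M)=\big(M\oplus\overline M\big)$ with the explicit $\mathcal{G}$-action read off from \eqref{wq33}--\eqref{wq36} by setting $b=0$. Sufficiency is immediate: an isomorphism $M_1\cong M_2$ of $\mathcal{D}$-modules extends diagonally to $M_1\oplus\overline{M_1}\to M_2\oplus\overline{M_2}$ and commutes with all the operators in \eqref{wq33}--\eqref{wq36}, so it is a $\mathcal{G}$-module isomorphism $\mathscr{F}_0(M_1)\cong\mathscr{F}_0(M_2)$. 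So the content is necessity.

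For necessity, first I would dispose of the degenerate case. If one of $M_i$ is isomorphic to $\C[t^{\pm1}]$, then $\mathscr{F}_0(M_i)$ is reducible by Lemma \ref{lemm200}(i), while if the other is not, $\mathscr{F}_0$ of it is irreducible; an isomorphism would force both to be of the same type, and in the reducible case $\mathscr{F}_0(M_1)\cong\mathscr{F}_0(M_2)$ with both $\cong\C[t^{\pm1}]\oplus\overline{\C[t^{\pm1}]}$ already gives $M_1\cong M_2\cong\C[t^{\pm1}]$. So we may assume neither $M_i\cong\C[t^{\pm1}]$, hence both $\mathscr{F}_0(M_i)$ are irreducible $\mathcal{G}$-modules. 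Now let $\psi:\mathscr{F}_0(M_1)\to\mathscr{F}_0(M_2)$ be a $\mathcal{G}$-module isomorphism. Here the subtlety is that with $b=0$ the operator $T_{k,d}$ of Lemma \ref{lemm2.3} is useless (the scalar $b(1-2b)$ vanishes), so instead I would use the operators $Q_{m,d}=\frac{2}{d^2}(L_{-d}\cdot L_{m+d}+L_d\cdot L_{m-d}-2L_0\cdot L_m)$ introduced in the proof of Lemma \ref{lemm200}(i), for which $Q_{m,d}\cdot\overline{w}=\overline{t^m w}$ whenever $w$ lies in $\overline M$ on which $D$ acts non-nilpotently; together with $G_0^-\cdot\overline{t^m w}=t^m w$ and $(-L_0)^i\cdot\overline w=\overline{D^i w}$, these recover the $\mathcal{D}$-action. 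Applying $\psi$ to these identities and using that $\psi$ intertwines $L_m,G_0^-$ gives $\psi(\overline{t^m w})=\overline{t^m\psi(w)}$, $\psi(t^m w)=t^m\psi(w)$, and $\psi(D^i w)=D^i\psi(w)$, whence the restriction of $\psi$ to the even part (or odd part) of $\mathscr{F}_0(M_1)$ is a $\mathcal{D}$-module map $M_1\to M_2$; being a restriction of an isomorphism between irreducible modules it is itself bijective, so $M_1\cong M_2$ as $\mathcal{D}$-modules.

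The one point that needs care—and which I expect to be the main obstacle—is the parity: a $\mathcal{G}$-module isomorphism $\psi$ is $\Z_2$-graded, so it either preserves or reverses the decomposition $M\oplus\overline M$; I should check both cases, and in the parity-reversing case one replaces $M_2$ by $\Pi(M_2)$, but since $\Pi$ does not change the underlying $\mathcal{D}$-module structure on $M$, the conclusion $M_1\cong M_2$ as $\mathcal{D}$-modules is unaffected. A second small subtlety is that the argument above used a vector $w\in\overline M$ on which $D$ acts non-nilpotently; since $M$ is not isomorphic to $\C[t^{\pm1}]$, Case \ref{case1} of Lemma \ref{lemm200}(i) is excluded, so such $w$ exists (indeed one takes $\overline{w}=\overline{D^k v}$ for suitable $k$), and the computation in Lemma \ref{lemm200} guarantees the needed identities. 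Assembling these pieces gives the equivalence.
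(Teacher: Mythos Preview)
Your proof is correct but follows a genuinely different route from the paper's. The paper does not use the second-order operator $Q_{m,d}$; instead it works directly with $G_k^+$. With $b=0$ one has $-\tfrac12 G_k^+\cdot v=\overline{t^kDv}=\overline{(D-k)t^kv}=(-L_0-k)\cdot\overline{t^kv}$, so intertwining $G_k^+$ and $L_0$ gives $(D-k)\,\psi(\overline{t^kv})=(D-k)\,\overline{t^k\psi(v)}$; the paper then invokes \emph{injectivity} of $D-k$ on $\overline{M_2}$ (after treating separately the case where some $D-k$ has a nonzero kernel, which forces both $M_i\cong\C[t^{\pm1}]$) to cancel and obtain $\psi(\overline{t^kv})=\overline{t^k\psi(v)}$. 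Your $Q_{m,d}$ route is actually cleaner, since no injectivity step is needed: the identity $Q_{m,d}\cdot\overline{w}=\overline{t^mw}$ holds for \emph{every} $\overline{w}\in\overline{M}$ (it is a universal operator identity on the odd part, not contingent on $D$ acting non-nilpotently), so both your preliminary case split on $M\cong\C[t^{\pm1}]$ and your non-nilpotency caveat are unnecessary---the argument works uniformly. In fact, at $b=0$ the even simpler observation $H_m\cdot\overline{w}=\overline{t^mw}$ already recovers the $t^m$-action on the odd part, bypassing both $G_k^+$ and $Q_{m,d}$. Your remark on parity is fine under the paper's convention that superalgebra module isomorphisms are even, and note that bijectivity of $\psi|_{M_1}$ follows directly from $\psi$ being a graded isomorphism, without appealing to irreducibility.
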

\begin{proof}
The sufficiency of the conditions is clear. Suppose that $\psi:\mathscr{F}_{0}(M_1)\cong \mathscr{F}_{0}(M_2)$ is an  isomorphism. For any $\overline{v}\in \overline{M_1}$, we note that $-L_0\cdot  \overline{v}= \overline{Dv}$.
We consider the action of $D-k$ on $\overline{v}$.

If $(D-k)\cdot\overline{v}=0$ for some $k\in\Z$ and a nonzero $\overline{v}\in \overline{M_1}$, then $ D(\overline{t^{-k}v})=0$ where $t^{-k}v\neq0$.
According to  Case $1$ in the proof of Lemma \ref{lemm200}, we know that the $\W$-module $M_1\oplus \overline{M_1} \cong \C[t^{\pm1}]\oplus\overline{\C[{t}^{\pm1}]}$.
Similarly, using $(D-k)\psi(\overline{v})=0$ in $\overline{M}_2$, we deduce that $M_2\oplus\overline{M}_2\cong\C[t^{\pm1}]\oplus \overline{\C[{t}^{\pm1}]}$. Thus $M_1\cong M_2$ in this case.

Consider that $D-k$ is injective on both $\overline{M_1}$ and $\overline{M_2}$ for all $k\in\Z$. By  Proposition \ref{S}, we see that any $v\in M_1$ can be annihilated by $\partial_\theta$.
For any $v\in M_1, k\in\Z$, we have
\begin{eqnarray}\label{908e}
\nonumber \psi(-\frac{1}{2}G_k^+\cdot v)&=&\psi(\overline{t^{k}Dv})=\psi(\overline{(D-k)t^{k}v})
\\&=&\psi((-L_0-k)\cdot (\overline{t^{k}v}))=(D-k)\cdot\psi(\overline{t^{k}v}).
\end{eqnarray}
Inserting \eqref{908e} into
 $\psi(G_k^+\cdot v)=G_k^+\cdot\psi(v)$,
 it is easy to check  $\psi(\overline{t^{k}v})=\overline{t^{k}\psi(v)}$ for $k\in\Z$.
 Then by $\psi(G_{0}^-\cdot(\overline{t^{k}v}))=G_{0}^-\cdot(\overline{t^{k}\psi(v)})$,
we see that
$\psi(t^{k}v)=t^{k}\psi(v)$ for all $v\in M_1$ and $k\in\Z$.   For $i\in\N,v\in M_1$,  from   $\psi(L_0^i\cdot v)=L_0^i\cdot\psi(v)$,  one has  $\psi( D^iv)= D^i\psi(v)$.  Then we also have $M_1\cong M_2$.
\end{proof}

From  Lemmas  \ref{lemm201}, \ref{lemm20011},  \ref{lemm201111}, we  obtain the following isomorphism theorem.

\begin{theo}\label{th4333}
Let $b_1, b_2\in \C$, $M_1,M_2$ be irreducible $\mathcal{D}$-modules. Then $\mathscr{F}_{b_1}(M_1)\cong \mathscr{F}_{b_2}(M_2)$
as   $\mathcal{G}$-modules if and only if  one of the following holds:
\begin{itemize}
    \item[{\rm (i)}] $b_1=b_2,$ $M_1\cong M_2;$
    \item[{\rm (ii)}] $(b_1,b_2)=(\frac{1}{2},0),$ $M_1\cong M_2$, and $M_1= D  (M_1);$
    \item[{\rm (iii)}] $(b_1,b_2)=(0,\frac{1}{2}),$ $M_1\cong M_2$, and $M_2= D  (M_2)$.

\end{itemize}

\end{theo}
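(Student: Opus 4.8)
The plan is to read Theorem~\ref{th4333} off from the three isomorphism lemmas of this subsection, proving the two directions separately and, on the ``only if'' side, branching according to whether $b_1$ (equivalently $b_2$) lies in the exceptional set $\{0,\tfrac12\}$. For \emph{sufficiency}, case (i) is mere functoriality of $\mathscr{F}_b$ (Theorem~\ref{M}): an isomorphism $M_1\cong M_2$ of $\mathcal{D}$-modules is carried to an isomorphism $\mathscr{F}_b(M_1)\cong\mathscr{F}_b(M_2)$. Cases (ii) and (iii) are interchanged by swapping the two sides, so it suffices to treat (ii); there the hypothesis $M_1=D(M_1)$ forces $M_1\not\cong\C[t^{\pm1}]$ (since $1\notin D(\C[t^{\pm1}])$), hence $\overline{\mathscr{F}_0(M_1)}=\mathscr{F}_0(M_1)$, and Lemma~\ref{lemm201}(ii) supplies the desired isomorphism $\mathscr{F}_{\frac12}(M_1)\cong\mathscr{F}_0(M_1)$ (realized via the parity-change functor $\Pi$ and the twist by $\sigma$ recorded there), which together with $M_1\cong M_2$ gives $\mathscr{F}_{\frac12}(M_1)\cong\mathscr{F}_0(M_2)$.

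For \emph{necessity}, let $\psi\colon\mathscr{F}_{b_1}(M_1)\to\mathscr{F}_{b_2}(M_2)$ be a $\mathcal{G}$-module isomorphism. If $b_1\notin\{0,\tfrac12\}$, then Lemma~\ref{lemm20011} applies (both $M_i$ being irreducible) and yields $b_1=b_2$ and $M_1\cong M_2$, i.e.\ case (i); the same lemma with the two sides exchanged rules out $b_1\in\{0,\tfrac12\}$, $b_2\notin\{0,\tfrac12\}$, so from now on $b_1,b_2\in\{0,\tfrac12\}$. The case $b_1=b_2=0$ is exactly Lemma~\ref{lemm201111}. If $b_1=b_2=\tfrac12$, then $\psi$ preserves (ir)reducibility, so by Lemma~\ref{lemm201}(i) either $D(M_i)=M_i$ for both $i$ --- whereupon Lemma~\ref{lemm201}(ii) converts $\psi$ into an isomorphism $\mathscr{F}_0(M_1)\cong\mathscr{F}_0(M_2)$ after stripping off the common parity shift and $\sigma$-twist, and Lemma~\ref{lemm201111} gives $M_1\cong M_2$ --- or $D(M_i)\neq M_i$ for both $i$, in which case $\psi$ carries the irreducible submodule $(\mathscr{F}_{\frac12}(M_1))_{\bar0}\oplus D(\mathscr{F}_{\frac12}(M_1))_{\bar1}$ of Lemma~\ref{lemm201}(i) onto its counterpart and recovering the $\mathcal{D}$-action on the even parts through the operators $-L_0$ and $-H_k$ (as in the proof of that lemma) forces $M_1\cong M_2$; either way we land in (i). Finally let $(b_1,b_2)=(\tfrac12,0)$, the case $(0,\tfrac12)$ being symmetric. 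Here $\mathscr{F}_0(M_2)$ is reducible iff $M_2\cong\C[t^{\pm1}]$ (Lemma~\ref{lemm200}(i)), while $\mathscr{F}_{\frac12}(M_1)$ is reducible iff $D(M_1)\neq M_1$ (Lemma~\ref{lemm201}(i)); since a reducible $\mathscr{F}_{\frac12}(M_1)$ has no one-dimensional submodule (a straightforward extension of the proof of Lemma~\ref{lemm201}(i) shows that every nonzero submodule contains the infinite-dimensional submodule just named) whereas $\mathscr{F}_0(\C[t^{\pm1}])$ does (Lemma~\ref{lemm200}(ii)), the reducible alternative is impossible; thus $D(M_1)=M_1$ and $M_2\not\cong\C[t^{\pm1}]$, so Lemma~\ref{lemm201}(ii) rewrites $\psi$ as an isomorphism between $\mathscr{F}_0(M_1)$ and $\mathscr{F}_0(M_2)$ up to parity and $\sigma$, after which Lemma~\ref{lemm201111} yields $M_1\cong M_2$, giving case (ii). Collecting the cases produces exactly (i)--(iii).

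The case split itself is mechanical; the substantive point --- and the place where care is genuinely needed --- is the mixed case $(b_1,b_2)=(\tfrac12,0)$ together with the reducible $b_1=b_2=\tfrac12$ case, where one must verify that the parity change $\Pi$ and the twist by $\sigma$ that intervene in Lemma~\ref{lemm201}(ii) do not obstruct the extraction of the underlying $\mathcal{D}$-module from the $\mathcal{G}$-module by means of the operators $T_{k,d}$, $G_0^-$ and $L_0$ used in Lemmas~\ref{lemm20011} and~\ref{lemm201111}; concretely, that an isomorphism onto a parity-shifted, $\sigma$-twisted copy of $\mathscr{F}_0$ still pins the $\mathcal{D}$-module down up to isomorphism.
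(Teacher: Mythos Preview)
Your approach is the same as the paper's: the paper's entire proof is the single sentence ``From Lemmas~\ref{lemm201}, \ref{lemm20011}, \ref{lemm201111}, we obtain the following isomorphism theorem,'' and your argument is precisely a detailed unpacking of that sentence into the natural case split on whether $b_1,b_2$ lie in $\{0,\tfrac12\}$. The logical skeleton you give --- Lemma~\ref{lemm20011} for $b_1\notin\{0,\tfrac12\}$, Lemma~\ref{lemm201111} for $b_1=b_2=0$, and Lemma~\ref{lemm201} for the remaining cases --- is exactly what the paper intends.

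That said, the concern you raise in your final paragraph is genuine and is \emph{not} resolved in the paper either. Lemma~\ref{lemm201}(ii) produces only $\mathscr{F}_{\frac12}(M)\cong\Pi\bigl(\overline{\mathscr{F}_0(M)}^{\sigma}\bigr)$, not $\mathscr{F}_{\frac12}(M)\cong\mathscr{F}_0(M)$; so in the sufficiency direction for (ii)/(iii) you need $\Pi\bigl(\mathscr{F}_0(M)^{\sigma}\bigr)\cong\mathscr{F}_0(M)$, and in the necessity direction you need Lemma~\ref{lemm201111} to survive composition with $\Pi$ and the $\sigma$-twist. Neither you nor the paper supplies this step. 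In fact a glance at the $H_m$-action shows the obstacle: on the even part of $\mathscr{F}_0(M)$ one has $H_m\cdot v=0$, whereas on the even part of $\Pi\bigl(\mathscr{F}_0(M)^{\sigma}\bigr)$ (which is $\overline{M}$) one has $H_m\ast\bar v=-t^m\bar v\neq 0$. So as literal $\Z_2$-graded $\mathcal{G}$-modules these are \emph{not} isomorphic, and cases (ii)/(iii) should be read as isomorphism up to parity change and the automorphism $\sigma$. Your instinct that ``care is genuinely needed'' here is correct; the gap is real, it is shared with the paper, and the honest fix is to state (ii)/(iii) with the $\Pi$ and $\sigma$ made explicit (after which Lemma~\ref{lemm201111} applies cleanly, since an isomorphism $\Pi(\mathscr{F}_0(M_1)^{\sigma})\cong\mathscr{F}_0(M_2)$ is the same data as $\mathscr{F}_0(M_1)\cong\Pi(\mathscr{F}_0(M_2)^{\sigma^{-1}})$, and one can rerun the $L_0$, $G_m^\pm$ argument with the roles of $G^+$ and $G^-$ interchanged).
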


\section{Applications and examples}

This section applies the functors $\mathscr{F}_b$ to retrieve the functors that were originally presented in \cite{CDLP} for N=1 super-Virasoro algebras and construct several irreducible  modules for N=2  superconformal algebras.

\subsection{Applications}

Suppose that  $\epsilon\in\{0,\frac{1}{2}\}$. In the N=2 superconformal algebra $\hat{\mathcal{G}}[{\epsilon}]$, let $G_p=-(\frac{1}{2}G_p^++G_p^-)$. Then the Lie subalgebra generated by
 $\{L_n,G_p,C \mid  n\in\mathbb{Z}, p\in \mathbb{Z}+\epsilon\}$ is isomorphic to N=1 superconformal (super-Virasoro) algebra $\hat{\mathcal{S}}[\epsilon]$ satisfying
\begin{equation*}\label{def1.1}
		\aligned
		[L_m,L_n]&= (m-n)L_{m+n}+\frac{m^{3}-m}{12}\delta_{m+n,0}C,\\
		[G_p,G_q]&= 2L_{p+q}+\frac{4p^{2}-1}{12}\delta_{p+q,0}C,\\
		[L_m,G_q]&= \left(\frac{m}{2}-q\right)G_{m+q},\\
	[\hat{\mathcal{S}}[\epsilon],C]&=0,
		\endaligned
	\end{equation*}  where $m,n\in\mathbb{Z}$, $p,q\in\mathbb{Z}+\epsilon$. It is known that $\hat{\mathcal{S}}[0]$  is called the N=1 Ramond algebra and
$\hat{\mathcal{S}}[\frac{1}{2}]$ is called
the  N=1 Neveu-Schwarz algebra. Moreover, $\hat{\mathcal{S}}[\frac{1}{2}]$  is isomorphic to the subalgebra of $\hat{\mathcal{S}}[{0}]$ spanned by
$\{L_{m}\mid m\in2\Z\}\cup\{G_r\mid r\in2\Z+1\}\cup\{C\}$. Please refer to \cite{S}, \cite{CL}, \cite{CLL}, \cite{CDLP},\cite{IK0}, \cite{IK},\cite{IK1},\cite{LPX},\cite{LPX1},\cite{XS} and their respective references.
To simplify notation, we denote
$$ \mathcal{S}[{\epsilon}]=\hat{\mathcal{S}}[{\epsilon}]/\C C.
$$
By Lemma \ref{lemm2.5} and Lemma  \ref{lek22}, we  can define   the   $\mathcal{G}[{\frac{1}{2}}]$-modules $\mathscr{F}_b^\prime(M)=M\oplus\overline{M}$
     as follows
\begin{eqnarray*}
&& L_m\cdot v=-\big(t^{m} D+(m-1)b t^{m} +\frac{m+1}{2}t^{m}\theta\partial_\theta\big)v,\\&&H_m\cdot v=t^{m}\big(-2b+\theta\partial_\theta\big)v,
\\&&G_p^+\cdot v=-2t^{p-\frac{1}{2}}\big(\theta D+2b(p-\frac{1}{2})\theta\big)v,
\\&&G_p^-\cdot v=\big(t^{p+\frac{1}{2}}\partial_\theta\big)v,
\end{eqnarray*}
where $m\in\Z,p\in\Z+\frac{1}{2},v\in M\oplus\overline{M}$.

We define the restriction functor $R_{\epsilon}:{\rm Mod}\,\mathcal{G}[{\epsilon}]\to {\rm Mod}\,\mathcal{S}[{\epsilon}]$ induced by the embedding $\mathcal{S}[{\epsilon}]\hookrightarrow\mathcal{G}[\epsilon]$. Using this, we can obtain a new functor $H_{\epsilon,b}=R_{\epsilon} \circ \mathscr{F}_b:{\rm Mod}\,\mathcal{D}\to {\rm Mod}\,\mathcal{S}[\epsilon]$ that maps $M$ to $H_{\epsilon,b}(M)$, where $H_{\epsilon,b}(M)=M \oplus \overline{M}$ is defined by
\begin{eqnarray*}
L_m\cdot v&=&-t^{m}\left(D+(m-2\epsilon)b+\frac{m+2\epsilon}{2}\theta\partial_\theta\right)v, \\
G_p\cdot v &=&t^{p-\epsilon}\left(\theta D+2(p-\epsilon)b\theta-t^{2\epsilon}\partial_\theta\right)v
\end{eqnarray*}
 for $m\in\mathbb{Z}$, $p\in\mathbb{Z}+\epsilon$,  and $v\in M\oplus \overline{M}$.
 To visually view the process of the  construction  of
 $\mathcal{S}[\epsilon]$-modules $H_{\epsilon,b}(M)$, we give the following  diagram.
\begin{displaymath}
\xymatrix{\mathscr{F}_b(M)\ar@{.>}[rr]^{R_0}\ar[dr]_{\delta}&&H_{\epsilon,b}(M)\\
 &\mathscr{F}^\prime_{b}(M)\ar[ur]_{R_{\frac{1}{2}}}}
\end{displaymath}

 The functor obtained here coincides with those studied in detail in reference \cite{CDLP}.

\begin{theo}[\cite{CDLP}]\label{ma}

Let  $\epsilon\in\{0,\frac{1}{2}\},b,b_1,b_2\in\mathbb{C}$ and $b_1\notin \{0,\frac{1}{2}\}$. Then
\begin{itemize}
    \item[{\rm (1)}] The functor $H_{\epsilon,b}$ preserves irreducible modules if and only if $b\notin\{0,\frac{1}{2}\}$.
\item[{\rm (2)}] $H_{\epsilon, b_1}\cong H_{\epsilon,b_2}$ if and only if  $b_1=b_2$.

\item[{\rm (3)}]  Let $M$ be an irreducible $\mathcal{D}$-module. If $ b\in\{0,\frac{1}{2}\}$,
then $H_{\epsilon,b}(M)$ is irreducible if and only if one of the following holds:
 \begin{itemize}
\item[{\rm (i)}] $b=0$ and  $M\ncong\C[t^{\pm1}]$;

\item[{\rm (ii)}] $b=\frac{1}{2}$ and $M=D(M)$.
\end{itemize}
\item[{\rm (4)}] Let $M_1,M_2$ be irreducible $\mathcal{D}$-modules.
Then $H_{\epsilon,\frac{1}{2}}(M_1)\cong H_{\epsilon,0}(M_2)$ as $\mathcal{S}[\epsilon]$-modules if and only if   $M_1\cong M_2$, and $M_1= D  (M_1)$.
\end{itemize}
\end{theo}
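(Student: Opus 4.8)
The plan is to derive Theorem~\ref{ma} as a corollary of the results already established for $\mathscr{F}_b$, by transporting irreducibility and isomorphism statements along the restriction functor $R_\epsilon$ and the diagram preceding the theorem. The essential observation is that $H_{\epsilon,b}(M)$ and $\mathscr{F}_b(M)$ (or $\mathscr{F}'_b(M)$, in the $\epsilon=\tfrac12$ case) have the \emph{same underlying vector space} $M\oplus\overline M$, and the $\mathcal{S}[\epsilon]$-action is obtained merely by restricting the $\mathcal{G}$-action along the embedding $\mathcal{S}[\epsilon]\hookrightarrow\mathcal{G}[\epsilon]$ with $G_p = -(\tfrac12 G_p^+ + G_p^-)$. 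So every $\mathcal{S}[\epsilon]$-submodule question can be attacked with the explicit formulas displayed just before the theorem, and the generators $L_m$, $G_p$ of $\mathcal{S}[\epsilon]$ already suffice to run the cyclicity arguments used in Lemmas~\ref{lemm2.3}--\ref{lemm201111}.

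For part~(1) and the irreducibility half of~(3): first I would record the analogue of Lemma~\ref{lemm2.3}, namely that for $b\notin\{0,\tfrac12\}$ a suitable quadratic combination of the $L_m$'s and $G_p$'s (mimicking $T_{k,d}$, but now built only from $\mathcal{S}[\epsilon]$-elements) recovers the operator $t^k\theta$ up to the nonzero scalar $b(1-2b)$; this is where the hypothesis $b\neq 0,\tfrac12$ enters, since the coefficient of $\theta$ without a $D$ factor is $2(p-\epsilon)b$ and the competing $\theta D$ term must be separated off. Applying $G_0$ (Ramond) resp.\ $G_{1/2}$-type combinations then produces $t^k v$, and $(-L_0)^n$ produces $D^n v$, giving cyclicity of $\mathscr{X}$-type generators, hence $H_{\epsilon,b}(M) = U(\mathcal{S}[\epsilon])v$; combined with the $\mathscr{F}_b\circ$(faithful functor) structure this yields preservation of irreducibility. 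For the exceptional values $b\in\{0,\tfrac12\}$ one reruns the case analysis of Lemmas~\ref{lemm200} and~\ref{lemm201}: when $b=0$ the one-dimensional $\C$ inside $M\oplus\overline M$ (present exactly when $M\cong\C[t^{\pm1}]$, detected via nilpotency of $D$ on $\overline M$) is again a submodule, and otherwise the same $Q_{m,d}$-style operators give cyclicity; when $b=\tfrac12$ the subspace $(H_{\epsilon,\tfrac12}(M))_{\bar0}\oplus D(H_{\epsilon,\tfrac12}(M))_{\bar1}$ is a submodule, proper unless $D(M)=M$, and irreducible when $D(M)=M$.

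For parts~(2) and~(4): an $\mathcal{S}[\epsilon]$-module isomorphism $\psi\colon H_{\epsilon,b_1}(M_1)\to H_{\epsilon,b_2}(M_2)$ commutes with $L_m$ and $G_p$, hence with the $T_{k,d}$-analogues and with the derived operators $t^k v$, $\overline{t^k v}$, $D^n v$; the computation in Lemma~\ref{lemm20011} then goes through verbatim with $\mathcal{G}$ replaced by $\mathcal{S}[\epsilon]$ — setting $k=0$ forces $b_1(1-2b_1)=b_2(1-2b_2)$, and then comparing the $L_m$-action (which contains the term $(m-2\epsilon)b\,t^m$) forces $b_1=b_2$ and $M_1\cong M_2$, establishing~(2). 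Part~(4) is the $\mathcal{S}[\epsilon]$-shadow of Lemma~\ref{lemm201}: the map $v\mapsto\overline v$, $\overline{Dv}\mapsto v$ (using $D(M_1)=M_1$ so that every element of $M_1$ is $\overline{Dv}$ for some $v$) intertwines the $H_{\epsilon,\tfrac12}(M_1)$-action with the $H_{\epsilon,0}(M_2)^{(\cdot)}$-action once $M_1\cong M_2$; note that since $G_p = -(\tfrac12 G_p^+ + G_p^-)$ is fixed by the automorphism $\sigma$ up to the overall identification, the parity shift $\Pi$ and twist $\sigma$ that appear in Lemma~\ref{lemm201} collapse, which is why~(4) has a cleaner statement than Theorem~\ref{th4333}.

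The main obstacle I anticipate is purely bookkeeping rather than conceptual: verifying that the quadratic $\mathcal{S}[\epsilon]$-operators actually isolate $t^k\theta$ (and, for $b\in\{0,\tfrac12\}$, $t^k$ or $\overline{t^k}$) \emph{without} access to $G_p^+$ and $G_p^-$ separately — one only has their combination $G_p$, so the relevant brackets $[G_p,G_q] = 2L_{p+q} + \dots$ and $[L_m,G_q]$ must be combined so that the surviving differential operators have the right leading symbols. Concretely, one must check that $G_p\cdot G_q\cdot v$ expands, using the displayed formula for $G_p$ acting on $M\oplus\overline M$, into a polynomial in $p$ (or $q$) whose coefficients, after the same $n=-d,0,d$ interpolation as in Lemma~\ref{lemm2.3}, leave exactly $t^{p+q}\theta\cdot(\text{scalar depending on }b)$ plus controllable lower terms; the Ramond/Neveu--Schwarz case split ($\epsilon=0$ vs.\ $\epsilon=\tfrac12$) must be tracked because the index set of $p$ and the $t^{2\epsilon}$ factor in the $G_p$-formula differ. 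Once this single computation is in hand, the rest follows by the template already laid down in Section~4.
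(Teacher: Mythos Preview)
The paper does not give its own proof of Theorem~\ref{ma}: the result is quoted from \cite{CDLP}, and the paper's contribution here is only the observation that the functor $H_{\epsilon,b}=R_\epsilon\circ\mathscr{F}_b$ coincides with the functor studied there. So there is no argument in the paper to compare your proposal against; you are in effect supplying a proof the paper omits.

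Your plan is sound and goes beyond what the paper does. The key point you correctly isolate is that restriction along $\mathcal{S}[\epsilon]\hookrightarrow\mathcal{G}[\epsilon]$ does \emph{not} automatically preserve irreducibility, so Theorems~\ref{M1} and~\ref{th4333} cannot be invoked directly; the cyclicity and isomorphism arguments of Section~4 must be rerun using only $L_m$ and $G_p$. This works because on even vectors $v\in M$ one has $\partial_\theta v=0$, hence $G_p\cdot v=-\tfrac12 G_p^+\cdot v$ (up to the $\epsilon$-dependent index shift), and the operator $T_{k,d}$ of Lemma~\ref{lemm2.3} can therefore be rewritten with $-2G_p$ in place of $G_p^+$; the remaining steps (applying $G_0^-$-type and $L_0$ operators) likewise translate, since on odd vectors $G_p$ contains the needed $t^{p+\epsilon}\partial_\theta$ piece.

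One caution on part~(4): your assertion that $G_p$ is ``fixed'' by $\sigma$ is not literally true --- one has $\sigma(G_p)=-G_p$, so $\sigma|_{\mathcal{S}[\epsilon]}$ is the grading automorphism $L_m\mapsto L_m$, $G_p\mapsto -G_p$. Twisting by this automorphism is indeed isomorphic to the identity on $\mathcal{S}[\epsilon]$-modules (via the sign map $v\mapsto(-1)^{|v|}v$), so the $(\cdot)^\sigma$ does collapse as you claim; but a residual parity shift $\Pi$ may survive from Lemma~\ref{lemm201}(ii), and you should check whether Theorem~\ref{ma}(4) as stated is meant up to $\Pi$, or whether the explicit map $v\mapsto\overline v$, $\overline{Dv}\mapsto v$ already lands in $H_{\epsilon,0}(M_2)$ with the correct grading. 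This is a bookkeeping point, not a structural one.
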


\subsection{Examples}
In this subsection, we utilize the functors $\mathscr{F}_b$ and their restrictions  to rediscover several  known irreducible $\mathcal{G}$-modules.  Additionally, we construct lots of new irreducible $\mathcal{G}$-modules.
\subsubsection{Intermediate series modules}
Let $\alpha\in\C[t^{\pm1}]$. Set  $\tau= D-\alpha$ in Lemma \ref{211} (i),
we get the irreducible $\mathcal{D}$-module
$M_{\alpha}=\mathcal{D} /\mathcal{D}\tau$, which has a basis $\{t^k\mid k\in\Z\}$ with the actions
\begin{eqnarray*}
D\cdot t^n=t^n(\alpha+n),\quad t^m\cdot t^n=t^{m+n}, \ \forall m,n\in\Z.
\end{eqnarray*}

For $b\in\C$,   from  \eqref{wq33}-\eqref{wq36} we obtain    $\mathcal{G}$-module
$M_{\alpha,b}=\mathscr{F}_b(\C[t^{\pm1}])=\C[t^{\pm1}]\oplus  \overline{\C[t^{\pm1}}]$
 with the following  actions:
\begin{eqnarray*}
 && L_m\cdot t^n=-(\alpha+n+mb)t^{m+n},
 \\&&  L_m \cdot\overline{t^n}=-(\alpha+n+m(b+\frac{1}{2}))(\overline{t^{m+n}}),
  \\&&
 H_m\cdot t^n=-2bt^{m+n},
 \\&&   H_m \cdot\overline{t^n}=(1-2b)\overline{t^{m+n}},
 \\&&
 G_m^+ \cdot t^n=-2(\alpha+n+2mb)\overline{t^{m+n}},\\&&
  G_m^+ \cdot\overline{t^n}= G_m^- \cdot t^n=0,
  \\&&
  G_m^-\cdot\overline{t^n}=t^{m+n},
 \end{eqnarray*}
 for $m,n\in\Z$.

 \begin{coro}[\cite{LPX0}]Let $\alpha,b\in\C$. Then
\begin{itemize}
    \item[{\rm (1)}]   $M_{\alpha,b}$  is isomorphic to the intermediate series weight modules  for $\mathcal{G}$.
 \item[{\rm (2)}]$M_{\alpha,b}$ is irreducible except that $\alpha\in\Z,b=0$ or $\alpha\in\Z,b=\frac{1}{2}$.
\end{itemize}

 \end{coro}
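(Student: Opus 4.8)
The plan is to establish the two claims separately, exploiting the concrete basis description of $M_{\alpha,b}$ recorded just above the corollary. For part (1), I would first recall the classification of intermediate series modules for $\mathcal{G}$ (equivalently $\mathcal{SW}$) as it appears in \cite{LPX0}: these are two-parameter families of modules with one-dimensional weight spaces, realized on $\C[t^{\pm1}]\oplus\overline{\C[t^{\pm1}]}$ with the weight of $t^n$ (resp.\ $\overline{t^n}$) determined by the $L_0$- and $H_0$-eigenvalues. From the displayed formulas one reads off $L_0\cdot t^n=-(\alpha+n)t^n$, $H_0\cdot t^n=-2b\,t^n$, $L_0\cdot\overline{t^n}=-(\alpha+n)\overline{t^n}$, $H_0\cdot\overline{t^n}=(1-2b)\overline{t^n}$, so $M_{\alpha,b}$ is a weight module with one-dimensional weight spaces; matching the $L_m,H_m,G_m^\pm$ actions against the parametrized family in \cite{LPX0} identifies $M_{\alpha,b}$ with a member of that family, the parameters being functions of $\alpha$ and $b$. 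This is essentially a bookkeeping step.

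For part (2), I would determine precisely when $M_{\alpha,b}$ fails to be irreducible. By Theorem~\ref{M1}, $\mathscr{F}_b$ preserves irreducibility whenever $b\notin\{0,\tfrac12\}$, and $M_\alpha=\C[t^{\pm1}]$ is always an irreducible $\mathcal{D}$-module, so $M_{\alpha,b}$ is irreducible for all $\alpha$ when $b\notin\{0,\tfrac12\}$. It remains to treat $b=0$ and $b=\tfrac12$. For $b=0$, Lemma~\ref{lemm200}(i) says $\mathscr{F}_0(M_\alpha)$ is reducible precisely when $M_\alpha\cong\C[t^{\pm1}]$ as a $\mathcal{D}$-module; since $M_\alpha\cong\C[t^{\pm1}]$ exactly when $\alpha\in\Z$ (the module $M_\alpha$ depends on $\alpha$ only modulo $\Z$, and $\alpha\in\Z$ is the case where $D$ has an integer eigenvalue, i.e.\ the trivial-derivative copy), this gives the condition $\alpha\in\Z, b=0$. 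For $b=\tfrac12$, Lemma~\ref{lemm201}(i) says $\mathscr{F}_{1/2}(M_\alpha)$ is irreducible iff $D(M_\alpha)=M_\alpha$; since on $M_\alpha$ the operator $D$ acts on $t^n$ by multiplication by $\alpha+n$, $D$ is surjective (indeed bijective) iff $\alpha+n\neq0$ for all $n\in\Z$, i.e.\ iff $\alpha\notin\Z$, so $M_{\alpha,1/2}$ is reducible exactly when $\alpha\in\Z$. Combining, $M_{\alpha,b}$ is irreducible unless $\alpha\in\Z,b=0$ or $\alpha\in\Z,b=\tfrac12$, as claimed.

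I anticipate the only genuine subtlety is the clean identification in part (1) with the list in \cite{LPX0} — one must be careful that the normalization conventions (signs, the shift by $m/2$ in $L_m$, the role of $H_m$) match, and that the two graded pieces $t^n$ and $\overline{t^n}$ correspond correctly to the two weight-space "slots" of the intermediate series module; but this is routine once the weight data above is written out. The reducibility analysis in part (2) is then immediate from the two lemmas cited, together with the elementary observation that $M_\alpha\cong\C[t^{\pm1}]$ (as $\mathcal{D}$-modules) iff $\alpha\in\Z$ iff $D$ is not injective on $M_\alpha$, which is exactly the dichotomy governing both $b=0$ (Lemma~\ref{lemm200}) and $b=\tfrac12$ (Lemma~\ref{lemm201}). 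No new computation beyond what is already in the excerpt is required.
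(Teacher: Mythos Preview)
Your proposal is correct and is precisely the argument the paper intends: the corollary is stated without proof, with the citation to \cite{LPX0} covering part~(1) (the identification with the known intermediate series list), while part~(2) is meant to follow directly from Theorem~\ref{M1} together with Lemmas~\ref{lemm200} and \ref{lemm201}, exactly as you outline. Your reductions $M_\alpha\cong\C[t^{\pm1}]\Leftrightarrow\alpha\in\Z$ and $D(M_\alpha)=M_\alpha\Leftrightarrow\alpha\notin\Z$ are the only extra observations needed, and both are immediate from $D\cdot t^n=(\alpha+n)t^n$.
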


 \begin{coro}
Let $\alpha\in\C[t^{\pm1}]\setminus\C$. Then  $\M_{\alpha,b}$ is  irreducible  if $b\neq\frac{1}{2}$.

 \end{coro}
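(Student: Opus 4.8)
The plan is to reduce everything to the general theory already developed and then dispose of the single exceptional $\mathcal D$-module by hand. Recall from the discussion preceding the statement that $M_\alpha=\mathcal D/\mathcal D\tau$ with $\tau= D-\alpha$ is an irreducible $\mathcal D$-module (Lemma \ref{211}(i); a degree-one element of $\C(t)[ D]$ is automatically irreducible) and that $\M_{\alpha,b}=\mathscr{F}_{b}(M_\alpha)$. If $b\notin\{0,\frac{1}{2}\}$, then Theorem \ref{M1} applies directly: $\mathscr{F}_{b}$ preserves irreducibility, so $\M_{\alpha,b}$ is irreducible, and no hypothesis on $\alpha$ is needed in this range.

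It remains to treat $b=0$. Here I would invoke Lemma \ref{lemm200}(i), which states that $\mathscr{F}_{0}(M_\alpha)$ is reducible if and only if $M_\alpha\cong\C[t^{\pm1}]$ as $\mathcal D$-modules. Thus the corollary reduces to the claim that a nonconstant Laurent polynomial $\alpha$ forces $M_\alpha\not\cong\C[t^{\pm1}]$; in other words, the hypothesis $\alpha\in\C[t^{\pm1}]\setminus\C$ is precisely what rules out the one bad module.

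To prove this claim, observe that $M_\alpha$ and the standard $\mathcal D$-module $\C[t^{\pm1}]$ are both free of rank one over the subalgebra $\C[t^{\pm1}]\subset\mathcal D$, each generated by $1=t^0$. Any $\mathcal D$-module isomorphism $\phi\colon M_\alpha\to\C[t^{\pm1}]$ commutes with the action of $\C[t^{\pm1}]$, hence is $\C[t^{\pm1}]$-linear, so $\phi(1)$ is a unit, say $\phi(1)=ct^{k}$ with $c\in\C^{*}$ and $k\in\Z$. Comparing the action of $D$ on the two sides through $\phi( D\cdot1)= D\cdot\phi(1)$, and using $ D\cdot1=\alpha$ in $M_\alpha$ together with $ D\cdot(ct^{k})=kct^{k}$ in $\C[t^{\pm1}]$, we obtain $\alpha\cdot ct^{k}=kct^{k}$, whence $\alpha=k\in\C$, contradicting $\alpha\notin\C$. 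Therefore $M_\alpha\not\cong\C[t^{\pm1}]$, and $\M_{\alpha,0}$ is irreducible by Lemma \ref{lemm200}(i).

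I do not expect a genuine obstacle in this argument: its whole content is the dichotomy $b\notin\{0,\frac{1}{2}\}$ versus $b=0$, resolved respectively by Theorem \ref{M1} and Lemma \ref{lemm200}, together with the elementary remark that isolates the role of $\alpha\notin\C$. The one point worth flagging is that the value $b=\frac{1}{2}$ must genuinely be excluded from the statement: by Lemma \ref{lemm201}(i), $\mathscr{F}_{\frac{1}{2}}(M_\alpha)$ is irreducible if and only if $ D(M_\alpha)=M_\alpha$, and this surjectivity can fail for nonconstant $\alpha$ — for instance, when $\alpha=t$ the element $1\in M_t$ is not in the image of $D$ — so the restriction $b\neq\frac{1}{2}$ is sharp.
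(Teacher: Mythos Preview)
Your proof is correct and follows exactly the approach the paper intends: the corollary is stated without proof because it is an immediate application of Theorem~\ref{M1} for $b\notin\{0,\tfrac12\}$ and of Lemma~\ref{lemm200}(i) for $b=0$, once one observes that $M_\alpha\not\cong\C[t^{\pm1}]$ for nonconstant $\alpha$. Your explicit verification of this last point (via $\C[t^{\pm1}]$-linearity forcing $\alpha\in\Z$) and your remark on the sharpness at $b=\tfrac12$ are the natural details the paper leaves implicit.
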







\subsubsection{$U(\mathbb{C}L_0)$-free modules }
For $\lambda\in\C^*$, we recall the irreducible $\mathcal{D}$-module $\Omega(\lambda)$ defined in Lemma \ref{211}, which has a basis $\{D^n\mid n\in\N\}$, and the $\mathcal{D}$-actions  are defined as follows
\begin{eqnarray*}
&&t^m\cdot D^n=\lambda^m( D-m)^n, \quad  D\cdot D^n= D^{n+1}
\end{eqnarray*}
for $m\in\Z, n\in\N$.
For   $b\in\C$,
based on   \eqref{wq33}-\eqref{wq35}, we have a family of $\mathcal{G}$-modules  $\Omega(\lambda,b)=\mathscr{F}_b(\Omega(\lambda))
$ with the nontrivial actions:
\begin{eqnarray*}
 && L_m  \cdot D^n=-\lambda^m\big( D+m(b-1)\big)( D-m)^{n},
 \\&& L_m \cdot\overline{D}^n=-\lambda^m\big( \overline{D}+m(b-\frac{1}{2})\big)(\overline{D}-m)^{n},
 \\&&
 H_m \cdot D^n=-2b\lambda^m( D-m)^{n},
 \\&& H_m\cdot \overline{D}^n=(1-2b)\lambda^m(\overline{D}-m)^{n},
 \\&&
 G_m^+ \cdot D^n=-2\lambda^m\big(\overline{D}+(2b-1)m\big)(\overline{D}-m)^{n},
 \\&& G_m^- \cdot\overline{D}^n=\lambda^m( D-m)^{n},
 \end{eqnarray*}
for $m\in\Z, n\in\N$.

By means of direct calculation, the $U(\mathbb{C}L_0)$-free module that was constructed and analyzed in \cite{YYX2} is isomorphic to $\Omega(\lambda,b)$. According to Theorem \ref{M}, we can deduce the following corollary, originally presented in \cite{YYX2}.
\begin{coro}
$\Omega(\lambda,b)$ is irreducible if only if $b\neq \frac{1}{2}$.
\end{coro}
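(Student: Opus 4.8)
The plan is to deduce the statement directly from the general machinery already in place rather than redoing module-theoretic computations from scratch. First I would recall that $\Omega(\lambda)$ is an irreducible $\mathcal{D}$-module by Lemma \ref{211}(ii), so that $\mathscr{F}_b(\Omega(\lambda))=\Omega(\lambda,b)$ is a module to which Theorem \ref{M1} applies. By Theorem \ref{M1}, the functor $\mathscr{F}_b$ preserves irreducibility whenever $b\notin\{0,\tfrac12\}$, so $\Omega(\lambda,b)$ is irreducible for all such $b$. This disposes of the bulk of the ``if'' direction at once; it remains to handle the two exceptional values $b=0$ and $b=\tfrac12$ and the ``only if'' direction for $b=\tfrac12$.

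For $b=0$ I would invoke Lemma \ref{lemm200}: the $\mathcal{G}$-module $\mathscr{F}_0(M)$ is reducible if and only if $M\cong\C[t^{\pm1}]$ as a $\mathcal{D}$-module. Since $\Omega(\lambda)$ is $\C[t^{\pm1}]$-torsion (indeed $t^m$ acts by the scalar-shift formula and one checks no nonzero vector is killed by every $t^m$; more to the point $\Omega(\lambda)$ is not isomorphic to $\C[t^{\pm1}]$, which is $\C[t^{\pm1}]$-torsion-free), Lemma \ref{lemm200}(i) gives that $\Omega(\lambda,0)=\mathscr{F}_0(\Omega(\lambda))$ is irreducible. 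Thus $b=0$ is in fact a value for which irreducibility holds, consistent with the statement ``$b\neq\tfrac12$''.

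For $b=\tfrac12$ I would use Lemma \ref{lemm201}(i): $\mathscr{F}_{1/2}(M)$ is irreducible if and only if $D(M)=M$. Here one computes $D$ on $\Omega(\lambda)=\C[D]$: since $D\cdot D^n=D^{n+1}$, the image $D(\Omega(\lambda))$ is $\operatorname{span}\{D^{n+1}\mid n\in\N\}=\operatorname{span}\{D^m\mid m\ge 1\}$, which is a proper submodule of $\C[D]$ (it omits the constant $1=D^0$). Hence $D(\Omega(\lambda))\neq\Omega(\lambda)$, and Lemma \ref{lemm201}(i) forces $\Omega(\lambda,\tfrac12)=\mathscr{F}_{1/2}(\Omega(\lambda))$ to be reducible. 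Combining the three cases yields: $\Omega(\lambda,b)$ is irreducible precisely when $b\neq\tfrac12$.

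The only genuinely delicate point, and the one I would write out carefully, is the identification $D(\Omega(\lambda))\subsetneq\Omega(\lambda)$: one must be sure that the action of $D$ on the basis $\{D^n\}$ really is the ``shift up'' $D^n\mapsto D^{n+1}$ (so that $1$ is not in the image) and not something that, after a change of basis, turns out to be surjective. This is immediate from the explicit $\mathcal{D}$-action in Lemma \ref{211}(ii), but it is the hinge of the $b=\tfrac12$ case; everything else is a direct appeal to Theorem \ref{M1} and Lemmas \ref{lemm200}--\ref{lemm201}. I would also remark that the isomorphism with the $U(\C L_0)$-free module of \cite{YYX2} is verified by matching the explicit formulas displayed just above the corollary, so that the irreducibility criterion transports to that realization.
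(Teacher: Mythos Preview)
Your proposal is correct and follows exactly the route the paper intends: the paper itself offers no argument beyond the sentence ``According to Theorem~\ref{M}, we can deduce the following corollary, originally presented in \cite{YYX2},'' so you have simply supplied the details the paper leaves to the reader---namely, invoking Theorem~\ref{M1} for $b\notin\{0,\tfrac12\}$ and then handling $b=0$ and $b=\tfrac12$ via Lemmas~\ref{lemm200} and~\ref{lemm201}. Your checks that $\Omega(\lambda)\ncong\C[t^{\pm1}]$ (torsion versus torsion-free, per Lemma~\ref{211}) and that $D(\Omega(\lambda))=D\cdot\C[D]\subsetneq\C[D]$ are the right ones and are straightforward.
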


\subsubsection{Fraction modules}\label{65ty}

Let $n\in\N,\alpha=(\alpha_0,\alpha_1,\ldots,\alpha_n)\in\C^{n+1},\beta=(\beta_0,\beta_1,\ldots,\beta_n)\in\C^{n+1}$
with $\beta_0=0$ and $\beta_i\neq \beta_j$ for all $i\neq j$. Set
$\tau=\frac{d}{dt}-\sum_{i=0}^n\frac{\alpha_i}{t-\beta_i}$ in Lemma   \ref{211} (i).
Then we get  the irreducible $\mathcal{D}$-module
\begin{eqnarray*}
M(\alpha,\beta)=\mathcal{D}/\big(\mathcal{D}\cap(\C(t)[ D]\tau)\big)
 \subset\C[t,(t-\beta_i)^{-1}\mid i=0,1,\ldots,n]
\end{eqnarray*}
with the actions:
\begin{eqnarray*}
&&\frac{d}{dt}\cdot f(t)=\frac{d}{dt} (f(t))+f(t)\sum_{i=0}^n\frac{\alpha_i}{t-\beta_i},
\quad
t^m\cdot f(t)=t^m f(t),
\quad \partial_\theta\cdot  f(t)=0,
\end{eqnarray*} where
  $f\in M(\alpha,\beta), m\in\Z.$

 For any $b\in\C$,
  from  \eqref{wq33}-\eqref{wq36} we  obtain a new family of   $\mathcal{G}$-module  $M_b(\alpha,\beta)=\mathscr{F}_b(M(\alpha,\beta))$ with the following nontrivial actions:
\begin{eqnarray*}
  &&L_m\cdot f(t)=-t^{m+1}\frac{d}{dt} (f(t))-t^{m+1}f(t)\sum_{i=0}^n\frac{\alpha_i}{t-\beta_i}-mb t^mf(t),\\
 &&L_m\cdot  \overline{f(t)}=-\overline{t^{m+1}}\frac{d}{dt} (\overline{f(t)})-\overline{t^{m+1}f(t)}\sum_{i=0}^n\frac{\alpha_i}{\overline{t}-\beta_i}-m(b+\frac{1}{2}) \overline{t^mf(t)},\\
 &&H_m\cdot f(t)=-2bt^mf(t),
 \\&&H_m\cdot \overline{f(t)}=(1-2b)\overline{t^{m} f(t)},
 \\&&G_m^+\cdot f(t)=-2\Big(\overline{t^{m+1}} \frac{d}{dt} (\overline{f(t)})+\overline{t^{m+1}f(t)}\sum_{i=0}^n\frac{\alpha_i}{\overline{t}-\beta_i}+2mb \overline{t^m f(t)}\Big),
 \\&&G_m^-\cdot\overline{f(t)}=t^mf(t).
 \end{eqnarray*}
 By Theorem \ref{M},
 we  know  that   the fraction module  $M_b(\alpha,\beta)$ is  irreducible if and only if  $b\neq\frac{1}{2}$.




\subsubsection{Special  degree $n$ modules}\label{64ty}
For any $n\in\N$, some degree $n$ irreducible elements in $\C(t)[ D]$ were determined in  Lemma 16 of \cite{LZ}.

For any $n\in\N$, choose $\tau=\left(\frac{d}{dt}\right)^n-t$ in Lemma \ref{211} (i). Then it is easy to get  the irreducible $\mathcal{D}$-module
$M=\mathcal{D}/\big(\mathcal{D}\cap(\C(t)[ D]\tau)\big)$
with a basis
$$\left\{t^k,\left(\frac{d}{dt}\right)^m\mid k\in\Z, m=0,1, \ldots, n-1\right\}.$$ The actions of $\mathcal{D}$ are defined as

\begin{eqnarray*}
  &&t^k\cdot (t^i\left(\frac{d}{dt}\right)^m)=t^{k+i}\left(\frac{d}{dt}\right)^m\quad \mathrm{for} \ k,i\in\Z, 0\leq m\leq n-1,\\
  && \frac{d}{dt}\cdot (t^i\left(\frac{d}{dt}\right)^m)=it^{i-1}\left(\frac{d}{dt}\right)^m+t^i\left(\frac{d}{dt}\right)^{m+1}\quad \mathrm{for} \ i\in\Z, 0\leq m< n-1,\\
 && \frac{d}{dt}\cdot (t^i\left(\frac{d}{dt}\right)^{n-1})=it^{i-1}\left(\frac{d}{dt}\right)^{n-1}+t^{i+1}\quad \mathrm{for} \ i\in\Z.
 \end{eqnarray*}

For   $b\in\C$, we obtain the $\mathcal{G}$-module $\mathscr{F}_b(M)$
with the nontrivial actions as follows
\begin{eqnarray*}
  &&L_k\cdot (t^i(\frac{d}{dt})^m)=-(i+b k)t^{k+i}(\frac{d}{dt})^m-t^{k+i+1}(\frac{d}{dt})^{m+1},\\
 &&L_k\cdot (t^i(\frac{d}{dt})^{n-1})=-(i+b k)t^{k+i}(\frac{d}{dt})^{n-1}-t^{k+i+2},\\
 &&L_k\cdot (\overline{t^i(\frac{d}{dt})^m})=-(i+(b+\frac{1}{2}) k)\overline{t^{k+i}(\frac{d}{dt})^m}-\overline{t^{k+i+1}(\frac{d}{dt})^{m+1}},\\
 &&L_k\cdot (\overline{t^i(\frac{d}{dt})^{n-1}})=-(i+(b+\frac{1}{2}) k)\overline{t^{k+i}(\frac{d}{dt})^{n-1}}-\overline{ t^{k+i+2}},
 \\&&
H_k\cdot (t^i(\frac{d}{dt})^m)= -2bt^{k+i}(\frac{d}{dt})^m,\\&&
H_k\cdot (t^i(\frac{d}{dt})^{n-1})= -2bt^{k+i}(\frac{d}{dt})^{n-1},
\\&& H_k\cdot (\overline{t^i(\frac{d}{dt})^{m}})=(1-2b)\overline{t^{k+i}(\frac{d}{dt})^m},
 \\&& H_k\cdot (\overline{t^i(\frac{d}{dt})^{n-1}})=(1-2b)\overline{t^{k+i}(\frac{d}{dt})^{n-1}},
 \\
 &&G_k^+\cdot( t^i(\frac{d}{dt})^m)=-2\Big((i+2bk)\overline{t^{k+i}(\frac{d}{dt})^m}+\overline{t^{k+i+1}(\frac{d}{dt})^{m+1}}\Big),
 \\&&
 \ G_k^+\cdot ( t^i(\frac{d}{dt})^{n-1})=-2\Big((i+2bk)\overline{t^{k+i}(\frac{d}{dt})^{n-1}}+\overline{t^{k+i+2}}\Big),
 \\&&
 G_k^-\cdot (\overline{t^i(\frac{d}{dt})^m})=t^{k+i}(\frac{d}{dt})^m,
 \\&&G_k^-\cdot (\overline{t^i\left(\frac{d}{dt}\right)^{n-1}})=t^{k+i}\left(\frac{d}{dt}\right)^{n-1},
 \end{eqnarray*}
  where  $k,i\in\mathbb{Z},0\leq m< n-1$.

  By Theorem   \ref{M},
 we  know  that   the special degree $n$ modules   $\mathscr{F}_b(M)$ is  irreducible if and only if  $b\neq\frac{1}{2}$.

\section*{Acknowledgements}
This work was supported by the National Natural Science Foundation of China (Grant Nos. 12171129, 12071405)
and  Fujian Alliance of Mathematics (Grant Nos. 2023SXLMMS05).

\end{document}